\documentclass[journal,onecolumn]{IEEEtran}

\usepackage{mathrsfs}
\usepackage{color}
\usepackage{enumerate}
\usepackage{mathrsfs}
\usepackage{amssymb}
\usepackage{amsfonts}
\usepackage{amsmath}
\usepackage{multirow}
\usepackage{latexsym}
\DeclareSymbolFont{AMSb}{U}{msb}{m}{n}
\DeclareMathSymbol{\subsetneq}{\mathrel}{AMSb}{"28}

\input cyracc.def

\oddsidemargin 10pt
\evensidemargin 10pt
\topmargin 10pt
\headheight 0in
\headsep 0 in
\textheight 610pt
\textwidth 455pt
\hsize=17truecm
\vsize=23.7truecm
\tolerance=10000
\def\mymedskip{\vskip\medskipamount}
\def\mymedbreak{\par \ifdim\lastskip<\medskipamount
  \removelastskip \penalty-100 \mymedskip \fi}
\def\myaftermedspace{\par \ifdim\lastskip<\medskipamount
  \removelastskip \penalty55\mymedskip\fi}
\newcommand{\eop}{{\unskip\nobreak\hfil\penalty50
          \hskip2em\hbox{}\nobreak\hfil$\Box$
          \parfillskip=0pt \finalhyphendemerits=0 \par}}
\newenvironment{proof}%
{\mymedbreak{\noindent\bf Proof:\enspace}}{\eop\myaftermedspace}
{\mymedbreak{\noindent\bf Proof of Theorem #1:\enspace}}{\eop\myaftermedspace}
\newtheorem{teor}{Theorem}[section]
\newtheorem{defi}[teor]{Definition}
\newtheorem{fact}[teor]{Fact}
\newtheorem{problem}{Problem}
\newtheorem{exercise}{Exercise}
\newtheorem{examp}[teor]{Example}
\newtheorem{exa}[teor]{Example}
\newtheorem{lem}[teor]{Lemma}
\newtheorem{cor}[teor]{Corollary}
\newtheorem{con}[teor]{Conjecture}
\newtheorem{prop}[teor]{Proposition}
\newtheorem{rem}[teor]{Remark}
\newtheorem{alg}[teor]{Algorithm}
%\numberwithin{equation}{section}
%
\newcommand{\Tm}[1]{Theorem~\protect\ref{#1}}
\newcommand{\Le}[1]{Lemma~\protect\ref{#1}}
\newcommand{\Pn}[1]{Proposition~\protect\ref{#1}}

\newcommand{\Co}[1]{Corollary~\protect\ref{#1}}

\newcommand{\Sec}[1]{Section~\protect\ref{#1}}

\newcommand{\btm}[1]{\begin{teor} \label{#1}}
\newcommand{\etm}{\end{teor}}
\newcommand{\btmn}[2]{\begin{teor}[#1] \label{#2}}
\newcommand{\etmn}{\end{teor}}
\newcommand{\ble}[1]{\begin{lem} \label{#1}}
\newcommand{\ele}{\end{lem}}
\newcommand{\bLe}[1]{\begin{Lemma} \label{#1}}
\newcommand{\eLe}{\end{Lemma}}
\newcommand{\bpn}[1]{\begin{prop} \label{#1}}
\newcommand{\epn}{\end{prop}}
\newcommand{\bex}[1]{\begin{examp} \label{#1}}
\newcommand{\eex}{\hfill$\Box$\end{examp}}
\newcommand{\bde}[1]{\begin{defi} \label{#1}}
\newcommand{\ede}{\end{defi}}
\newcommand{\bco}[1]{\begin{cor} \label{#1}}
\newcommand{\eco}{\end{cor}}
\newcommand{\bcorn}[2]{\begin{cor}[#1] \label{#1}}
\newcommand{\ecorn}{\end{cor}}
\newcommand{\bcon}[1]{\begin{con} \label{#1}}
\newcommand{\econ}{\end{con}}
\newcommand{\bfact}[1]{\begin{fact} \label{#1}}
\newcommand{\efact}{\end{fact}}
\newcommand{\bpr}[1]{\begin{problem} \label{#1}}
\newcommand{\epr}{\end{problem}}
\newcommand{\bprnn}[1]{\begin{problemnn} \label{#1}}
\newcommand{\eprnn}{\end{problemnn}}
\newcommand{\bprn}[2]{\begin{problem}[#1] \label{#2}}
\newcommand{\eprn}{\end{problem}}
\newcommand{\bexer}[1]{\begin{exercise} \label{#1}}
\newcommand{\eexer}{\end{exercise}}
\newcommand{\bre}[1]{\begin{rem} \label{#1}}
\newcommand{\ere}{\end{rem}}
\newcommand{\balg}[1]{\begin{alg} \label{#1}}
\newcommand{\ealg}{\end{alg}}
\newcommand{\bqu}{\begin{question}}
\newcommand{\equ}{\end{question}}
\newcommand{\bs}{\begin{solution}}
\newcommand{\es}{\end{solution}}
\newcommand{\bh}{\begin{hint}}
\newcommand{\eh}{\end{hint}}
\newcommand{\bms}[1]{\begin{multisolution}{#1}}
\newcommand{\ems}{\end{multisolution}}
\newcommand{\bquo}{\begin{quote}}
\newcommand{\equo}{\end{quote}}
\newcommand{\beq}{\begin{equation}}
\newcommand{\eeq}{\end{equation}}
\newcommand{\beql}[1]{\begin{equation} \label{#1}}
\newcommand{\eeql}{\end{equation}}
\newcommand{\beqa}{\begin{eqnarray*}}
\newcommand{\eeqa}{\end{eqnarray*}}
\newcommand{\beqal}[1]{\begin{eqnarray} \label{#1}}
\newcommand{\eeqal}{\end{eqnarray}}
\newcommand{\beqan}{\begin{eqnarray}}
\newcommand{\eeqan}{\end{eqnarray}}
\newcommand{\bpf}{\begin{proof}}
\newcommand{\epf}{\end{proof}}

\newcommand{\cA}{{\cal A}}
\newcommand{\cB}{{\cal B}}

\newcommand{\cN}{{\cal N}}

\newcommand{\cZ}{{\cal Z}}

\newcommand{\bC}{{\bf C}}
\newcommand{\bF}{{\bf F}}

\newcommand{\bZ}{{\bf Z}}

\newcommand{\Aut}{{\rm Aut}}

\newcommand{\gs}{\sigma}

\newcommand{\ga}{\alpha}

\newcommand{\gd}{\delta}
\newcommand{\gl}{\lambda}

\newcommand{\bE}{{\bf E}}

\newcommand{\weight}{{\rm w}}
\newcommand{\supp}{{\rm supp}}
\newcommand{\hf}{\hat{f}}

\newcommand{\add}{+}

\newcommand{\lcm}{{\rm lcm}}
\newcommand{\expo}{{\rm exp}}
\newcommand{\chr}{{\rm char}}

\newcommand{\rnk}{{\rm rank}}

\begin{document}
\begin{titlepage}
%\title{Proof of a generalized Donoho-Stark uncertainty principle by shifting}
%HH
\title{The shift bound for abelian codes and generalizations of the Donoho-Stark uncertainty principle}
%for finite abelian groups
\date{\today}
\author{%
Tao Feng\\
School of Mathematical\  Sciences\\
Zhejiang University\\
Hangzhou 310027\\
Zhejiang, China\\
{\tt tfeng@zju.edu.cn}%
\\\and
Henk D. L.\ Hollmann\\Philips IP\&S\\ HTC 34,
5656 AE Eindhoven, \\the Netherlands\\
{\tt henk.d.l.hollmann@philips.com}%
\\\and
Qing Xiang\\Department\  of Math.\
Sciences\\University of Delaware\\Newark, DE 19716, USA\\
{\tt qxiang@udel.edu}%
}
%\thanks{This paper was presented in part at the  “Symposium
%in honor of Jack van Lint”, Veldhoven, the Netherlands, October 2-5, 2005.}%
\maketitle
\thispagestyle{empty}
\begin{center}
Dedicated to the memory of J.H.~van~Lint.
\vspace{12pt}
\end{center}
%\null\vfill
%
\begin{abstract} Let $G$ be a finite abelian group. If $f: G\rightarrow \bC$  is a nonzero function with Fourier
transform $\hf$, the Donoho-Stark uncertainty principle states that $|\supp(f)||\supp(\hf)|\geq |G|$. The purpose of this paper is twofold. 
First, we present the shift bound for abelian codes with a streamlined proof. 
%HDLH  Shouldn't we also mention the result in Section 5? Something like
% "... and we relate it to another simple bound", or " ... we present also an alternative proof, simpler new proof.""
Second, we use the shifting technique to prove a generalization and a sharpening  
of the Donoho-Stark uncertainty principle. In particular, the sharpened uncertainty principle states, with notation above, that $|\supp(f)||\supp(\hf)|\geq |G|+|\supp(f)|-|H(\supp(f))|,$
where $H(\supp(f))$ is the stabilizer of $\supp(f)$ in $G$.
\end{abstract}
\end{titlepage}
%\newpage

\newcommand{\Ghat}{\hat{G}}
%\newcommand{\Hhat}{\hat{H}} 
%HDLH: changed notation \Hhat -> \cal K and (see below) \Khat -> \cal L 
\newcommand{\Hhat}{{\cal K}}
\newcommand{\Khat}{{\cal L}} %HDLH: this was strange, so changed to \cal L
\newcommand{\chat}{\hat{c}}
\newcommand{\fhat}{\hat{f}}
\newcommand{\fh}{\hat{f}}
\newcommand{\Char}{\Gamma}

\newcommand{\norm}[1]{\|#1\|_\infty}
\section{\label{int}Introduction}
%
%
%A $q$-ary {\em cyclic code\/} of length $n$ is an ideal in the ring
%$\bF_q[x] \mod x^n-1$ of polynomials with coefficients in a finite field $\bF_q$
%modulo $x^n-1$. If $n$ and $q$ are relatively prime then such a code consists of all
%multiples of a divisor of $x^n-1$ called
%the {\em generator polynomial\/} of the code; the set of zeros of the generator
%polynomial in an extension field of $\bF_q$ are called the {\em defining zeros\/} of
%the code.

In a breakthrough paper \cite{lw}, Van Lint and Wilson developed a technique called
{\em shifting\/} to obtain lower bounds for the minimum distance of cyclic codes. The best bound obtainable by
this technique is called the {\em shift bound\/} (or, sometimes, the Van Lint-Wilson
bound), which depends only on the {\em defining zeros\/} of the cyclic code.
% and equals the true minimum distance in many instances.
%One nice thing about the shift bound is that this bound may be hard to compute, but
The shift bound can be difficult to compute, but
given the corresponding shifting steps, it is easy to verify correctness
of the bound. So it can be used as a {\em proof certificate\/} that the minimum
distance of a given code has (at least) a certain value.

The shifting technique for cyclic codes can be easily generalized to abelian codes, which are
ideals of a group algebra $\bF[G]$, with $G$ being a finite abelian group and $\bF$ some
finite field. If the field has characteristic 0 or if the characteristic of the field does not
divide $|G|$ then a Fourier-type transform can be
defined on $\bF[G]$,
%HH $\bE[G]$, for a suitable extension field $\bE$ of $\bF$,
and such a code can then be characterized in terms of the
vanishing of certain Fourier transform coefficients for all codewords.
The shift bound for abelian codes generalizes the ordinary shift bound and now only
%HH
depends on the set of coefficients that vanish for all codewords.
After Section~\ref{pre}, which contains some background on characters of
abelian groups, we present the shift bound for abelian codes in Section~\ref{shi} with a streamlined proof.
In Section~\ref{exa} we give some examples to illustrate applications of the shift bound. In \Sec{Salt} we present an alternative, simpler derivation of the shift bound, based on ideas from \cite{mes-pq}.

The Donoho-Stark uncertainty principle for finite abelian groups states: If $G$ is a finite abelian group, and $f:  G\rightarrow \bC$  is a
nonzero complex-valued function with Fourier transform $\hf$, then $|\supp(f)||\supp(\hf)| \geq |G|$; equality holds if and only if
%the support $S=\supp(f)$ of~$f$ is a coset of some subgroup of $G$ and
$f$ is a nonzero multiple of the restriction of a character to a coset of a subgroup of~$G$ (see, e.g., \cite{mat}, \cite{smi}).
%, \cite{pre2}, \cite{qpv}, \cite{tao}).

It seems natural to try to generalize this principle to all fields for which a
Fourier-type transform exists. The original proof in \cite{mat} as well as the simple
proofs in \cite{qpv} or \cite{tao}
crucially depend on the existence of an absolute value, and hence do not generalize to
finite fields. It is not too difficult to see that the elementary induction proof in
\cite{pre2} does generalize (indeed, note that for cyclic groups, the principle can be
%HH
seen to follow from the BCH bound); however, the resulting proof is still rather complicated.
Since the shift bound for abelian codes provides a lower bound on the
weight $w(f)=|\supp(f)|$ of 
a nonzero function $f: G\rightarrow \bF$ in terms of the support of its Fourier transform $\hf$,
it seems reasonable to investigate whether the Donoho-Stark uncertainty principle
can be obtained as a consequence of the shift bound.
In Section~\ref{gen} we show that this is indeed the case. Here, we use the shift bound to derive a generalization of the Donoho-Stark uncertainty principle. 
We note that a similar approach was used in \cite{mes-pq} for a generalization to non-abelian groups.

In Section~\ref{fext} we use the shifting technique to prove a sharpening of the Donoho-Stark uncertainty principle. Let $G$ be a finite abelian group, $\bF$ a field of characteristic 0 or characteristic $p$ with $p\not |\,|G|$, 
and let $f: G\rightarrow \bF$ be a nonzero function with Fourier transform $\hf$. We obtain a pair of inequalities which are stronger than the Donoho-Stark uncertainty principle: 
$|\supp(f)||\supp(\hf)|\geq |G|+|\supp(\hf)|-|H(\supp(\hf))|$ and $|\supp(f)||\supp(\hf)|\geq |G|+|\supp(f)|-|H(\supp(f))|$, where $H(\supp(\hf))$ is the stabilizer of $\supp(\hf)$ in $\Ghat$, and $H(\supp(f))$ is similarly defined as a subset of $G$.

\section{\label{pre}Preliminaries}
In this section we summarize some of the theory of $\bE$-valued characters and Fourier
transforms over finite abelian groups, for general (possibly finite) fields $\bE$.
Readers who are familiar with Fourier theory might skip this section on first reading.
Further background on harmonic analysis and Fourier analysis on groups can be found, e.g., in
\cite{hew}, \cite{man}, \cite{rud}, \cite{ter}.
For the use of characters and Fourier transforms in relation to coding theory,
%HH
see, e.g., \cite{bla}, \cite{cam}, \cite{for}, where most of the results below can be
%HH
found. Abelian codes were first investigated in \cite{ber1}, \cite{ber2}.

Let $(G,+)$ be a finite abelian group.
We write $|G|$ to denote the {\em order\/} of $G$. The {\em exponent\/} $\expo(G)$ of
$G$ is defined as the smallest positive integer $N$ for which $Nx=0$ for all $x\in G$,
where $0$ denotes the identity element of $(G,+)$.

In the remainder of this paper, $\bF$ is a field of characteristic $\chr(\bF)=0$ or
$\chr(\bF)=p$ with $p \not |\,|G|$, and $\bE$ is an extension of $\bF$ containing a
primitive $N$-th root of unity $\xi$, an element of multiplicative order $N$ in
%HH
$\bE$, where $N=\expo(G)$. Note that our assumption on $\chr(\bF)$ is necessary and
sufficient to guarantee that such an extension exists.
A character $\chi$ of $G$ is a homomorphism of $(G,+)$
to the cyclic group of order $N$ generated by $\xi$, and hence takes its values in
the field~$\bE$. We will refer to such a character as a {\em $\bE$-valued character\/}.
%
%A {\em $\bE$-valued character\/} is a homomorphism from $(G,+)$ into the
%multiplicative group $(\bE_N,.)$ of $N$-th roots of unity in $\bE$.
These characters form a group
$(\Ghat,+)$ under the operation of pointwise multiplication defined by
\[ (\chi+\phi)(g) = \chi(g)\phi(g) \]
for $\chi, \phi \in \Ghat$ and $g\in G$.

The abelian group $G$ is isomorphic to a direct product
\[ G \cong \bZ_{n_1} \times \cdots \times \bZ_{n_r}\]
of cyclic groups; note that
\[ N=\expo(G) = \lcm(n_1, \ldots, n_r).\]
For each $e=(e_1, \ldots, e_r)\in G$, define the map $\chi_e: G \rightarrow \bE$ by
\[\chi_e(x) = \xi^{\sum_{i=1}^r \frac{N}{n_i}e_i x_i}, \]
for all $x=(x_1, \ldots, x_r)\in G$. It is easy to see that
%the $\chi_e$ are all {\em distinct\/} characters,
each $\chi_e$ is a character, and $\chi_a+\chi_b=\chi_{a+b}$ for all
$a,b\in G$. Moreover, since $\xi$ is a primitive $N$-th root of unity, all the $\chi_e$'s
are {\em distinct\/} and, in fact, it is easily shown that
each character is of this form.
Hence the group $(\Ghat,+)$ of characters of $G$ is isomorphic to $(G,+)$. Note that the identity element of $\Ghat$ is $\chi_0: x\mapsto 1$ for all $x\in G$. An important property is that for every $x\in G\setminus \{0\}$ there exists a character $\chi\in \Ghat$ for which $\chi(x)\neq 1$.

%HDLH
For $a\in G$, define $\Phi_a: \Ghat \rightarrow \bE$ by letting 
\[\Phi_a(\chi)=\chi(a)\]
for all $\chi\in \Ghat$. Note that $\Phi_a$ is a character on $\Ghat$, that is, an element of $\hat{\Ghat}$. In fact, it turns out that $G$ and $\hat{\Ghat}$ are isomorphic (Pontryagin duality), with the map $a\rightarrow \Phi_a$ being an isomorphism.

Given the group $(\Ghat,+)$ of $\bE$-valued characters and a function $f: G \rightarrow \bE$, we define the
{\em Fourier transform\/} $\hf$ of $f$ by
\[ \hf(\chi) = \sum_{x\in G} f(x) \chi(-x), \]
for all $\chi \in \Ghat$.
The {\em supports\/} $\supp(f)$ and $\supp(\hf)$ of $f$ and $\hf$ are defined respectively by
\[ \supp(f) = \{ x \in G \mid f(x) \neq 0\}, \qquad
\supp(\hf) = \{ \chi \in \Ghat \mid \hf(\chi) \neq 0\}. \]

The {\em group algebra\/} $\bF[G]$ consists of all formal sums
\[ f = \sum_{x\in G} f(x) x, \]
with $f(x) \in \bF$. In what follows, we will not distinguish between the element $f$
%$f=\sum_{x\in G}f(x)x$
in~$\bF[G]$ written as a vector
\[ f = (f(x_1), \ldots, f(x_n)) \]
in $\bF^n$, where $G=\{x_1, \ldots, x_n\}$,
and the function $f: G \rightarrow \bF$ given by
\[ f : x \mapsto f(x),\]
for all $x\in G$.
Addition and scalar multiplication in $\bF[G]$ are defined by the corresponding vector
operations, and multiplication $*$ in $\bF[G]$ is the convolution operation defined by
%HDLH
%\[ (f*g)(z) =\sum_{z\in G} \biggl(\sum_{x\in G} f(x)g(z-x)\biggr)z.\]
\[ (f*g)(z) =\sum_{x\in G} f(x)g(z-x)\]
for all $z\in G$.
%for all $g, h\in G$.
%\[ (f+f')(g) = f(g)+f'(g) \]
%and multiplication $f*f'$ defined by
%and multiplication  $f, f'\in \bF[G]$, with addition $f+f'$ defined by

%The {\em Fourier Transform\/} $\hf$ of a function $f\in \bF[G]$ is defined as
%\[ \hf(\chi) = \sum_{g\in G} \chi(-g) f(g) \]
%for all $\chi \in \Ghat$;
%the inverse of this operation is the Inverse Fourier Transform defined
%for functions $F$ in $\bE[\Ghat]$ by
%\[ \hF(g) = |G|^{-1} \sum_{\chi\in \Ghat} \chi(g) F(\chi) \]
%for all $g\in G$.
%We define the {\em support\/} $\supp(f)$ and $\supp(\hf)$ of $f$ and $\hf$ as
%\[ \supp(f) = \{ g \in G \mid f(g) \neq 0\}, \qquad
%\supp(\hf) = \{ \chi \in \Ghat \mid \hf(\chi) \neq 0\}. \]
%
The characters $\chi\in\Ghat$, considered as elements in $\bE[G]$, constitute a basis
of $\bE[G]$; the Fourier transform can then be understood in terms of a base
change since
\[ f = \sum_{x\in G} f(x) x = |G|^{-1}\sum_{\chi \in \Ghat} \hf(\chi) \chi \]
%HH
holds for all $f \in \bE[G]$.
%HDLH
In fact, the characters constitute an orthogonal basis of eigenfunctions, with 
\[ \chi*\chi' = 
\left\{
\begin{array}{cc}  |G| \chi, & \mbox{if $\chi=\chi'$;}\\
		   0  , & \mbox{otherwise},
\end{array}
\right.
\]
and 
\[ f*\chi=\hf(\chi) \chi\]
for all $\chi, \chi'\in \Ghat$ and $f\in \bE]G]$.
   
%We remark that the characters satisfy
%\[ \chi * \psi = \delta_{\chi, \psi} |G|\chi, \qquad (\chi, \psi) = \delta_{\chi,
%\psi} |G|,\]
%holds for all $\chi, \psi\in \Ghat$.
%where $\delta$ is the Kronecker delta;
%in particular, the characters constitute an orthogonal base in $\bE[G]$.

For our investigation of the case of equality in the generalized Donoho-Stark principle
in Section~\ref{shi}, we need some additional facts.
Firstly, the {\it Inverse Fourier Transform} for functions $g: \Ghat \rightarrow \bE$ defined by
\[ g^*(x) = |G|^{-1} \sum_{\chi\in \Ghat} g(\chi)\chi(x) \]
for $x\in G$ is the inverse of the Fourier transform, that is, $(\hf)^*=f$ for
all functions $f: G \rightarrow \bE$.

Next, let $H$ and $\Hhat$ be subgroups of $G$ and $\Ghat$, respectively. We define
$H^\perp$ and $\Hhat^\perp$ by
\[ H^\perp = \{ \chi \in \Ghat \mid \mbox{$\chi(y) = 1$ for all $y\in H$} \} \]
and
\[ \Hhat^\perp = \{ x \in G \mid \mbox{$\eta(x) = 1$ for all $\eta \in \Hhat$} \}. \]
Then $H^\perp$ and $\Hhat^\perp$ are subgroups of $\Ghat$ and $G$, respectively, with
%\[ H^\perp \cong G/H, \qquad \Hhat^\perp \cong \Ghat/ \Hhat. \]
%In particular, we have that
\beql{LEorth} |H^\perp| = |G|/|H|, \qquad |\Hhat^\perp| =  |\Ghat|/ |\Hhat|. \eeql
Moreover, if $H=\Hhat^\perp$, then $H^\perp = \Hhat$.
Finally, we have that
\[ \sum_{y \in H} \chi(y) =
\left\{
\begin{array}{cc}  |H|, & \mbox{if $\chi\in H^\perp$;}\\
		   0  , & \mbox{otherwise},
\end{array}
\right.
\]
and  %HDLH: added this
\[ \sum_{\chi \in H^\perp} \chi(y) =
\left\{
\begin{array}{cc}  |H^\perp|, & \mbox{if $y\in H$;}\\
		   0  , & \mbox{otherwise}.
\end{array}
\right.
\]

%Later on in this paper, we will need the following simple result.
Using the above facts it is not difficult to show the following.
\begin{teor}\label{tsup} Let the function $f: G\rightarrow \bE$ be such that its Fourier
Transform $\hf$
has support equal to a coset $\phi+\Hhat$ of a subgroup $Hhat$ of $\Ghat$. % $K=\Hhat^\perp$.
Then $|\supp(f)| |\supp(\fh)|=|\Ghat|$ holds if and only if $f$ and $\hf$ are of the
form
%HH
\[ f=\lambda \phi I_{a+H}, \qquad \fhat=\mu\Phi_{-a} I_{\phi+\Hhat},\]
for some $a\in G$, with $\lambda=\phi(-a)f(a)\in\bE\setminus\{0\}$ and $\mu=|\Hhat|f(a)$;
here $H=\Hhat^\perp$,
%the subgroup of elements $g$ of $G$ for which $\chi(g)=1$ for all $\chi\in\Hhat$, and
$\Phi_{-a}: \chi\mapsto \chi(-a)$ is the character in $\hat{\Ghat}\cong G$ associated
with $-a$, and $I_{a+H}$ and
$I_{\phi+\Hhat}$ denote the indicator functions of $a+H$ and $\phi+\Hhat$,
respectively.
\end{teor}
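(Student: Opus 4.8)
The plan is to prove the two implications separately; the forward (``if'') direction is a single Fourier-transform computation, and the reverse (``only if'') direction, which inverts it, is the substantive part.

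For the ``if'' direction, suppose $f=\lambda\phi\,I_{a+H}$ with $H=\Hhat^\perp$ and $\lambda\ne 0$. Since a character is nowhere zero, $\supp(f)=a+H$, so $|\supp(f)|=|H|$. I would compute $\hf$ directly: substituting $x=a+y$ with $y\in H$ gives
\[ \hf(\chi)=\lambda\sum_{x\in a+H}\phi(x)\chi(-x)=\lambda\,\phi(a)\chi(-a)\sum_{y\in H}(\phi-\chi)(y). \]
By the orthogonality relations and the identity $H^\perp=\Hhat$ (valid since $H=\Hhat^\perp$), the inner sum equals $|H|$ when $\phi-\chi\in\Hhat$, i.e. when $\chi\in\phi+\Hhat$, and vanishes otherwise. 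Hence $\supp(\hf)=\phi+\Hhat$ and, using $\chi(-a)=\Phi_{-a}(\chi)$, one reads off $\hf=\mu\,\Phi_{-a}I_{\phi+\Hhat}$ with $\mu=|H|\phi(a)\lambda=|H|f(a)$. Since $|\supp(\hf)|=|\Hhat|$ and $|H|=|G|/|\Hhat|$ by \eq{LEorth}, this gives $|\supp(f)||\supp(\hf)|=|H||\Hhat|=|G|$.

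For the ``only if'' direction, assume $\supp(\hf)=\phi+\Hhat$ and $|\supp(f)||\supp(\hf)|=|G|$. As $|\supp(\hf)|=|\Hhat|$, the equality forces $|\supp(f)|=|G|/|\Hhat|=|H|$. I would now apply the inversion formula $(\hf)^*=f$ and split the sum over the coset $\phi+\Hhat$ by writing $\chi=\phi+\eta$ with $\eta\in\Hhat$:
\[ f(x)=|G|^{-1}\sum_{\chi\in\phi+\Hhat}\hf(\chi)\chi(x)=|G|^{-1}\phi(x)\,G(x), \qquad G(x):=\sum_{\eta\in\Hhat}\hf(\phi+\eta)\,\eta(x). \]
Because $\phi$ is nowhere zero, $\supp(f)=\supp(G)$.

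The crux is the structure of $G$. For every $h\in H=\Hhat^\perp$ one has $\eta(h)=1$ for all $\eta\in\Hhat$, so $G(x+h)=G(x)$; that is, $G$ is constant on each coset of $H$. Therefore $\supp(f)=\supp(G)$ is a union of cosets of $H$, each of cardinality $|H|$. Combined with $|\supp(f)|=|H|$, this forces the support to be a single coset $a+H$, on which $G$ equals one nonzero value $c$. Hence $f(x)=|G|^{-1}c\,\phi(x)$ on $a+H$ and $0$ elsewhere, i.e. $f=\lambda\phi\,I_{a+H}$ with $\lambda=|G|^{-1}c=f(a)\phi(-a)$. Substituting this $f$ into the computation of the second paragraph then produces the asserted form of $\hf$ and its constant $\mu$, closing the equivalence. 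The one step I expect to be genuinely delicate is this passage: recognising the $H$-periodicity of $G$ and using the sharp count $|\supp(f)|=|H|$ to collapse the support to a single coset. The remaining work --- the two orthogonality evaluations and the identification of $\lambda$ and $\mu$ --- is routine, provided one keeps the dual relations $H=\Hhat^\perp$, $H^\perp=\Hhat$ and the factor $|H|=|G|/|\Hhat|$ straight.
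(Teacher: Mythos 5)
Your proposal is correct and follows essentially the same route as the paper: both use the inversion formula to show that $f$ is, up to the nowhere-vanishing factor $\phi$, constant on cosets of $H=\Hhat^\perp$ (the paper phrases this as $f(x+h)=\phi(h)f(x)$, you as the $H$-periodicity of your auxiliary sum), then invoke $|\supp(f)|=|G|/|\Hhat|=|H|$ to collapse the support to a single coset, and finally compute $\hf$ by orthogonality. Your explicit computation even reproduces $\mu=|H|f(a)$ as in the paper's proof (the $|\Hhat|$ in the theorem statement appears to be a typo); the only cosmetic issue is your reuse of the symbol $G$ for the auxiliary function, which clashes with the group.
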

\begin{proof} By the inversion formula, we have 
%$\fhat(\chi)=\sum_{g\in G} f(g)\chi(-g)$, hence
$f(x)=|G|^{-1}\sum_{\chi\in\Ghat} \hf(\chi)\chi(x)$.
If $\fhat(\chi)=0$ for all $\chi\notin \phi+\Hhat$, then
for any $x\in G$ and $h\in H$, we have 
\beqa
f(x+h)&=&|G|^{-1}\sum_{\chi\in\phi+\Hhat} \fhat(\chi)\chi(x+h)\\
	&=& |G|^{-1}\phi(h) \sum_{\chi\in\phi+\Hhat} \fhat(\chi) \chi(x)\\
	&=& \phi(h)f(x),
\eeqa
where we used that $H=\Hhat^\perp$ and $\chi(h)=\phi(h)$ for $\chi\in\phi+\Hhat$ and $h \in \Hhat^\perp$.
Since $\phi(x)\neq0$ for all $x\in G$, we conclude from the above that the support
of $f$ is a union of cosets of $H$. Since $|H|=|\Hhat^\perp|=|\Ghat|/|\Hhat|$, we have $|\supp(f)||\supp(\hf)|=|G|$ only if the support of $f$ is $a+H$, for some $a\in G$, so that $f$
is of the form
$f=\lambda \phi I_{a+\Hhat}$ with $\lambda=\phi(-a)f(a)$.
Moreover, given that $f$ is of this form, using the fact that $H^\perp=\Hhat$,  we obtain that
%$\fhat=\mu \Phi_{-a} I_{\phi-H^\perp}$ with $\mu=|H|f(a)$, as claimed.
$\fhat=\mu \Phi_{-a} I_{\phi+\Hhat}$ with $\mu=|H|f(a)$, as claimed.
% and $\Phi_{-a}: \chi\rightarrow \chi(-a)$ a character in $\hat{\Ghat}$.
%Since $H^\perp = \Hhat$, the theorem follows.
\end{proof}
\section{\label{shi}Shifting for abelian codes}
We now discuss a technique called {\em shifting\/} to find lower bounds on the
minimum weight of abelian codes.
This technique is a straightforward generalization of the shifting technique
introduced in \cite{lw} to obtain lower bounds on the minimum distance of cyclic codes.

As before, $(G,+)$ is a finite abelian group, $\bF$ is a field with $\chr(\bF)=0$
or $\chr(\bF)=p$ with $p\not |\, |G|$, and
$\bE\supseteq \bF$ is a field extension of $\bF$ containing a primitive
$N$-th root of unity, where $N$ denotes the exponent ${\rm exp}(G)$ of $G$.
%Moreover, as reviewed in the previous section
Recall that the collection of $\bE$-valued characters on $G$ forms a group $(\Ghat,+)$
isomorphic to $(G,+)$.

Now let $f: G \rightarrow \bF$ be an $\bF$-valued function with $\hf$ as its Fourier transform.
We define
\beql{Zdef}
\cZ (f)= \{\chi \in \Ghat \mid \hf(\chi) = 0\},
\qquad \cN (f) = \Ghat \setminus \cZ. \eeql
%If no confusion can arise then w
We call $\cZ(f)$ and $\cN(f)$ the
{\em zeros and nonzeros of $f$\/} (in $\Ghat$), respectively. Note that $\supp(\hf)=\cN(f)$.

Let $\cZ\subseteq\Ghat$. The ideal $C$ in $\bF[G]$ consisting of all $f\in \bF[G]$ whose zeros include $\cZ$, i.e., $$C=\{f\in \bF[G]\mid \cZ(f)\supseteq \cZ\},$$  is called the {\em abelian code with $\cZ$ as defining zeros\/}.
Note that if $G$ is cyclic of order $n$, then $\Ghat$ essentially is the collection
$\bE_n$ of $n$-th roots of unity in $\bE$ and $C$ is just the cyclic code with
defining zeros $\cZ\subseteq \bE_n$.

Any field automorphism $\sigma: \bE\rightarrow \bE$ of $\bE$ that fixes $\bF$ pointwise (that is, $\gs\in \Aut(\bE/\bF)$)
induces a map on $\Ghat$ (which we denote again by $\sigma$) defined by
$\sigma: \chi \mapsto \chi^\sigma$, where $\chi^\sigma(x) = \chi(x)^\sigma$ for $x\in G$.
A subset of $\Ghat$ that is closed under {\em all\/} field automorphisms in $\Aut(\bE/\bF)$ will be called {\em $\bF$-closed\/}.
Note that the set of zeros $\cZ(f)$ of an $\bF$-valued function $f: G \rightarrow \bF$ 
is $\bF$-closed. Similarly, if the ideal $C$ in $\bF[G]$ has the set $\cZ$ as defining
zeros, then the collection $\cZ'$ of common zeros of elements of $C$, called the
{\em complete\/} set of zeros of $C$, is just the {\em $\bF$-closure\/} of $\cZ$,
the smallest $\bF$-closed superset of $\cZ$.

Now let $f$ be a nonzero function in $\bF[G]$, and let $\cZ=\cZ(f)$. Assume that $f$ have support ${\rm supp}(f)=S$, where
\[ {\rm supp}(f)=\{x\in G \mid f(x)\neq 0\}. \]
Write $S=\{x_1, \ldots, x_w\}$, where $w=\weight(f)=|{\rm supp}(f)|$ is the
{\em weight\/} of the vector $f$.
With each $\chi \in \Ghat$ we associate a vector $v(\chi)$ in $\bE^w$ defined by
\[ v(\chi)=(\chi(-x_1), \ldots, \chi(-x_w))^\top; \]
also, we define $\gamma = \gamma(f)$ in $\bF^w$ by
\[ \gamma=(f(x_1), \ldots, f(x_w))^\top. \]
As a consequence of these definitions, we have $\chi\in \cZ$ if
and only if $\gamma \perp v(\chi)$.
Finally, for $\psi \in \Ghat$, write $D(\psi)$ to denote the diagonal matrix
\[ D(\psi)={\rm diag}(\psi(-x_1), \ldots, \psi(-x_w)). \]
Note that
\[D(\psi)v(\chi)=v(\psi\add\chi),\]
where $\psi\add\chi$ is the character in $\Ghat$ defined by
$(\psi\add\chi)(x)=\psi(x)\chi(x)$ for all $x\in G$.

We say that the set $\cA\subseteq \Ghat$ is {\em independent\/} if the
corresponding set of vectors $V(\cA)=\{v(\chi)\mid \chi \in \cA\}$ is independent
in~$\bE^w$. Our interest in independent subsets of $\Ghat$ stems from the fact that if
$\cA\subseteq \Ghat$ is independent, then $\weight(f)=w\geq |\cA|$.
The next lemma, which is the key result for the shift bound, provides a means to
{\em construct\/} independent sets in $\Ghat$.
\begin{lem}
\label{Zindep}
\mbox{}
\begin{enumerate}
\item {}
[initialize] $\emptyset$ is independent;
\item {}[shifting] If $\cA$ is independent and if $\psi \in \Ghat$, then
$\psi\add\cA=\{\psi\add\chi \mid \chi \in \cA\}$ is independent;
\item {}[extension] If $\cA\subseteq \cZ$ is independent and if $\eta \notin \cZ$,
then $\cA\cup \{\eta\}$ is independent;
\item {}[field automorphisms] If $\sigma\in \Aut(\bE/\bF)$, then
$\sigma(\cA)=\{\chi^{\sigma} \mid \chi \in \cA\}\subseteq \cZ$, and $\sigma(\cA)$ is independent.
\end{enumerate}
\end{lem}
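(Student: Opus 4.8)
The four assertions are, in each case, statements about the linear independence over $\bE$ of the vectors $v(\chi)$, so my plan is to translate each claim into a statement about the family $V(\cA)=\{v(\chi)\mid\chi\in\cA\}\subseteq\bE^w$ and verify it by a short piece of linear algebra. The single identity I would keep in front of me throughout is $\hf(\chi)=\sum_{i=1}^w f(x_i)\chi(-x_i)=\gamma^\top v(\chi)$, so that $\chi\in\cZ$ (equivalently $\gamma\perp v(\chi)$) holds precisely when $\gamma^\top v(\chi)=0$. I would also note at the outset that every value $\chi(-x_i)$ is a power of $\xi$, hence a nonzero element of $\bE$.

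Parts~1 and~2 are immediate. For part~1, the empty family of vectors is independent by convention. For part~2 I would invoke the identity $v(\psi\add\chi)=D(\psi)v(\chi)$ recorded just before the lemma: since $D(\psi)=\diag(\psi(-x_1),\ldots,\psi(-x_w))$ has all diagonal entries equal to nonzero powers of $\xi$, it is an invertible linear map on $\bE^w$. Hence $V(\psi\add\cA)=D(\psi)\,V(\cA)$ is the image of the independent family $V(\cA)$ under an invertible map, and is therefore independent.

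For part~3, suppose $\cA\subseteq\cZ$ is independent and $\eta\notin\cZ$, and consider a hypothetical dependence $\sum_{\chi\in\cA}a_\chi v(\chi)+b\,v(\eta)=0$ with $a_\chi,b\in\bE$. Applying $\gamma^\top$ to both sides and using $\gamma^\top v(\chi)=\hf(\chi)=0$ for $\chi\in\cA$ leaves $b\,\gamma^\top v(\eta)=b\,\hf(\eta)=0$; since $\eta\notin\cZ$ means $\hf(\eta)\neq0$, this forces $b=0$, whereupon independence of $V(\cA)$ forces every $a_\chi=0$ (note $\eta\notin\cA$ automatically, as $\cA\subseteq\cZ$ while $\eta\notin\cZ$). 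For part~4, the inclusion $\sigma(\cA)\subseteq\cZ$ is exactly the $\bF$-closedness of $\cZ(f)$ already noted in the text. For the independence I would use that applying $\sigma$ componentwise sends $v(\chi)$ to $v(\chi^\sigma)$, i.e.\ $v(\chi^\sigma)=\sigma(v(\chi))$; as $\sigma$ acts $\sigma$-semilinearly on $\bE^w$ (additively, with $\sigma(cv)=\sigma(c)\sigma(v)$), applying $\sigma^{-1}$ to any dependence $\sum_\chi c_\chi v(\chi^\sigma)=0$ yields $\sum_\chi\sigma^{-1}(c_\chi)v(\chi)=0$, so independence of $V(\cA)$ gives $\sigma^{-1}(c_\chi)=0$ and hence all $c_\chi=0$.

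I expect no serious obstacle; each part is a short verification. The one point that genuinely demands care is the choice of pairing in part~3. Because the argument must work over an arbitrary field $\bE$, possibly of positive characteristic, I would deliberately use the \emph{bilinear} form $\gamma^\top v$ rather than a Hermitian inner product, so that the crucial identity $\gamma^\top v(\chi)=\hf(\chi)$ holds with no complex conjugation. This is precisely the feature that will later allow the Donoho--Stark principle to be transported to finite fields. The only other thing to watch is that parts~3 and~4 implicitly assume $\cA\subseteq\cZ$, which is what supplies both the vanishing $\gamma^\top v(\chi)=0$ and the $\bF$-closure inclusion $\sigma(\cA)\subseteq\cZ$.
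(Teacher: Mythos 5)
Your proof is correct and follows essentially the same route as the paper: part 2 via the nonsingular diagonal matrix $D(\psi)$, part 3 via the orthogonality of $\gamma$ to $V(\cA)$ but not to $v(\eta)$, and parts 1 and 4 being routine (the paper simply declares them ``evident,'' whereas you spell out the semilinear action of $\sigma$ on the vectors $v(\chi)$). Your explicit remarks on using the bilinear pairing $\gamma^\top v$ rather than a Hermitian form, and on the implicit hypothesis $\cA\subseteq\cZ$ in part 4, are sensible clarifications but do not change the argument.
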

\begin{proof}\\
1. Evident.\\
2. Since $\psi(x)\neq 0$ for all $x \in G$, the diagonal matrix $D(\psi)$ is nonsingular.
Since $D(\psi)V(\cA) = V(\psi\add\cA)$, the result follows.\\
3. Since $\cA\subseteq \cZ$ and $\eta\notin\cZ$, we see that the vector $\gamma$ is orthogonal to
all vectors in~$V(\cA)$ and $\gamma=\gamma(f)$ is not orthogonal to $v(\eta)$; hence $v(\chi)$
cannot be contained in the linear span of $V(\cA)$.\\
4. Evident.
\end{proof}

The rules 1--3 in Lemma~\ref{Zindep} inductively define a family of independent subsets of
$\Ghat$ that only depend on the subset $\cZ=\cZ(f)$ of $\Ghat$. We will call such sets
{\em independent with respect to~$\cZ$\/} or, more briefly, {$\cZ$-independent\/}.
%
%As an  immediate consequence of \Le{Zindep} we have the following.
%Note that application of rule 4 in the lemma depends on knowledge of the specific
%fields $\bF$ and~$\bE$.
\Le{Zindep} has the following immediate consequence.
\begin{teor}\label{shift}
%[Shift-bound]
Let $f: G \rightarrow \bF$ be a nonzero function from an abelian group $(G,+)$ to some field
$\bF$ of characteristic zero or of characteristic $p$ relatively prime to $|G|$, $\cZ=\{\chi \in \Ghat\mid \hf(\chi)=0\}$ be the set of zeros of $f$, and let $\weight(f)=|\supp(f)|$. Then 
\[ \weight(f) \geq |\cA| \]
for every $\cZ$-independent subset $\cA$ of $\Ghat$.
\end{teor}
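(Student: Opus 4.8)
My plan is to treat this theorem as what it is advertised to be: an immediate corollary of \Le{Zindep} together with the trivial dimension bound in $\bE^w$. Recall that $\cZ$-independence was defined inductively, a subset $\cA \subseteq \Ghat$ being $\cZ$-independent precisely when it arises from finitely many applications of rules 1--3 of \Le{Zindep} starting from the empty set. The first step is therefore to verify, by induction on the number of rule applications, that every $\cZ$-independent set is independent in the linear-algebra sense, that is, that the associated family $V(\cA)=\{v(\chi)\mid \chi\in\cA\}$ is linearly independent in $\bE^w$, where $w=\weight(f)$. The base case $\cA=\emptyset$ is independent by rule 1, and the inductive step is handled directly by rules 2 and 3 of \Le{Zindep}, which assert exactly that shifting by a character and adjoining a nonzero $\eta\notin\cZ$ both preserve this property. (The field-automorphism rule 4 preserves it too, but is not needed for the defined notion.)

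The second step is to extract the numerical bound. Since $V(\cA)$ is a linearly independent family in the $w$-dimensional space $\bE^w$, it contains at most $w$ vectors. Moreover, linearly independent vectors are in particular pairwise distinct, so the map $\chi\mapsto v(\chi)$ is injective on $\cA$ and hence $|\cA|=|V(\cA)|$. Combining these two observations gives $|\cA|=|V(\cA)|\le w=\weight(f)$, which is precisely the asserted inequality $\weight(f)\ge |\cA|$.

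I do not expect any genuine obstacle here: the entire substance of the argument has already been discharged in \Le{Zindep}, whose three constructive rules were formulated exactly so as to propagate the linear-independence of $V(\cA)$. The only point warranting a word of care is the passage from the \emph{inductive} definition of $\cZ$-independence to the \emph{static} conclusion that $V(\cA)$ is linearly independent; but this is handled by the one-line induction above. Accordingly, I would keep the proof to just a couple of sentences, citing \Le{Zindep} for the preservation of independence under rules 1--3 and the elementary fact that $\bE^w$ admits at most $w$ linearly independent vectors for the final count.
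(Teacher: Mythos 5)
Your proposal is correct and matches the paper's own (implicit) argument exactly: the paper states the theorem as an immediate consequence of Lemma~\ref{Zindep}, relying on precisely the induction over rules 1--3 and the fact that an independent family in $\bE^w$ has at most $w$ elements, which is what you spell out. No difference in approach; your extra remark on the injectivity of $\chi\mapsto v(\chi)$ is a harmless bit of added care.
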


For a subset $\cZ$ of an abelian group $\Ghat$, we denote by $\gd(\Ghat, \cZ)$ the
largest size of a $\cZ$-independent subset of $\Ghat$. % with respect to $\cZ$.
Then Theorem~\ref{shift} has the following consequence.
\begin{teor}[the shift bound for abelian codes] \label{code-shift}
Let $(G,+)$ be an abelian group and let $\bF$ be a field of $\chr(\bF)=0$ or $\chr(\bF)=p$ not dividing $|G|$.
If $C$ is an abelian code in $\bF[G]$ with set of defining zeros $\cZ$,
%$$\cZ=\{\chi \in \Ghat\mid \mbox{$c(\chi)=0$ for all $c\in C$}\}$$,
then the minimum weight $d(C)$ of the code $C$ satisfies
\[ d(C) \geq \min \gd(\Ghat, \cZ'),\]
where the minimum is over all $\bF$-closed proper subsets $\cZ'$ of $\Ghat$ such that
$\cZ'\supseteq \cZ$.
\end{teor}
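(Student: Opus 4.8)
The plan is to deduce this from \Tm{shift} by showing that a minimum-weight codeword of $C$ furnishes an admissible set $\cZ'$ in the minimum on the right-hand side. First I would pick a nonzero codeword $f\in C$ with $\weight(f)=d(C)$ and set $\cZ'=\cZ(f)$. Applying \Tm{shift} to $f$, every $\cZ'$-independent subset $\cA\subseteq\Ghat$ satisfies $\weight(f)\geq|\cA|$; taking $\cA$ of maximal size gives $d(C)=\weight(f)\geq\gd(\Ghat,\cZ')$ by the definition of $\gd$.

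The remaining work is to verify that this particular $\cZ'$ lies in the index set of the minimum, i.e.\ that it is an $\bF$-closed proper subset of $\Ghat$ with $\cZ'\supseteq\cZ$. The inclusion $\cZ'\supseteq\cZ$ is immediate from $f\in C$ together with the defining property of the code. The $\bF$-closedness of $\cZ'=\cZ(f)$ follows from $f$ being $\bF$-valued, exactly as observed in the paragraph preceding \Le{Zindep}: any $\sigma\in\Aut(\bE/\bF)$ permutes $\cZ(f)$. Properness follows from the inversion formula: since $f\neq0$ we must have $\hf\neq0$, so $\supp(\hf)=\cN(f)$ is nonempty and hence $\cZ'\subsetneq\Ghat$.

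Putting the pieces together yields $d(C)=\weight(f)\geq\gd(\Ghat,\cZ')\geq\min\gd(\Ghat,\cZ'')$, the minimum running over all $\bF$-closed proper subsets $\cZ''\supseteq\cZ$, which is the asserted inequality. I do not anticipate a genuine obstacle, since all the combinatorial substance has already been isolated in \Le{Zindep} and \Tm{shift}; the only point deserving care is confirming that the chosen $\cZ'$ is admissible, and of its three required properties the one most easily overlooked is properness, which is precisely where the hypothesis that $f$ is nonzero (via the inversion formula) enters.
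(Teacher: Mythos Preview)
Your argument is correct and is exactly the intended derivation: the paper states \Tm{code-shift} simply as a consequence of \Tm{shift}, and your proof spells out precisely that deduction by taking a minimum-weight codeword $f$, applying \Tm{shift} with $\cZ'=\cZ(f)$, and checking that $\cZ'$ is an $\bF$-closed proper superset of $\cZ$. There is nothing to add.
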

%
%\begin{rem}
%\rm
%The collection of $\cZ$-independent sets need not necesarily
%constitute a matroid. (It is however easy to show that a subset of a
%$\cZ$-independent set is again $\cZ$-independent.)
%For example, if $G=\bZ_n$ with $n\geq 8$ and $\cZ=\{0,1\}$, then the $\cZ$-independent
%sets are all sets of size at most two together with all sets of size three that
%contain two (cyclically) consecutive elements.
%Now if $\cA=\{0,2\}$ and $\cB=\{4,5,6\}$, then
%both $\cA$ and $\cB$ are $\cZ$-independent, however,
%there is no $\beta\in\cB$ for which $\cA\cup \{\beta\}$ is again $\cZ$-independent.
%Therefore, in this case not every independent set of vectors $v(\chi)$ is also
%$\cZ$-independent.
%So at least in theory it is possible that in certain cases the result in
%Theorem~\ref{shift} could be improved.
%%A {\em matroid\/}  $\cM$ is
%Certainly, we have that
%\[w(f)\geq \min {\rm rank} (\cM),\]
%where the minimum is over all {\em matroids\/} $\cM$ on $\Ghat$ representable over
%$\bE$ that are closed with respect to application of rules 1--4 in Lemma~\ref{Zindep}.
%%$\cZ$-independent sets.
%Here a matroid (see e.g.,\cite{welch})
%is a collection of sets, called independent sets, that satisfy the
%matroid axioms, and ${\rm rank}(\cM)$ stands for the {\em rank\/} of $\cM$, the size
%of the largest independent set in $\cM$.
%\end{rem}
%
%

\section{\label{exa}Some examples of shifting}
In this section we illustrate the shifting method by discussing a couple of applications.
Readers who are mainly interested in the Donoho-Stark uncertainty principle can skip
this section.
\begin{exa} \rm [The BCH-bound for abelian codes] Let $C\subseteq \bF[G]$ be an abelian code with
defining zeros $\cZ$ in $\Ghat$, where $\bF$ is a field of characteristic $p$ not dividing $|G|$. If there is a character $\chi\in\Ghat$ 
and integers $d, a$ such that $\cZ$ contains all zeros 
$\chi^i$ for $a \leq i \leq a+d-2$ and if $D$ is the abelian code with defining zeros
\[ \cZ \cup \{ \chi^i \mid i\geq 0\}, \]
then the minimum weight $d(C)$ of $C$ satisfies
\[ d(C) \geq \min( d, d(D)). \] 

To see this, consider a word $c$ in $C$; then either $c$ is contained in $D$ and
$w(c)\geq d(D)$, or there is a $e\geq d$ such that all $\chi^i$ for $a \leq i \leq
a+e-2$ are zeros of $c$ but $\chi^{a+e-1}$ is a nonzero of $c$. In the latter case,
we can use the shifting rules from Lemma~\ref{Zindep} to construct independent sets
as follows:
\beqa \emptyset  \mapsto \emptyset \cup\{\chi^{a+e-1}\}  
	 \mapsto \{\chi^{a+e-2}\} &\cup& \{\chi^{a+e-1}\} \\
	 &\mapsto& \ldots 
	 \mapsto \{\chi^{a}, \ldots, \chi^{a+e-2}\} \cup \{\chi^{a+e-1}\};
\eeqa
hence the set $\{\chi^{a}, \ldots, \chi^{a+e-1}\}$ is independent, of size $e\geq d$,
so that by Theorem~\ref{shift} we have that $w(c)\geq d$.

An application of this bound can be found for example in \cite{dgv}.
\end{exa}

\begin{exa} \rm 
Let $\bF=\bF_2$, and consider the binary abelian code $C$ over $G=\bZ_7\times \bZ_7$
with defining zeros 
\[ (0,0), (0,1), (0,3), (1,0), (3,0), (1,1), (1,2), (1,4), (3,3), (3,5), (3,6)\]
in the dual group $\Ghat=\bZ_7\times\bZ_7$.
This code is not equivalent to a cyclic code.
Note that if $(x,y)$ is a zero, then $(2x,2y)$ and $(4x,4y)$ are also (conjugate) 
zeros. So the full set of zeros of codewords is 
\beqa \cZ = \{\hspace{-0.25in}& &(0,0), (0,1), (0,2), (0,3), (0,4), (0,5), (0,6),
(1,0), (2,0), (3,0), (4,0), (5,0), (6,0), \\
&&(1,1), (1,2), (1,4), (2,1), (2 ,2), (2,4), (4,1), (4,2), (4,4), \\
&&(3,3), (3,5), (3,6), (5,3), (5,5), (5,6), (6,3), (6,5), (6,6)\}.
\eeqa
Note that this code is the collection of all polynomials 
\[ c(x,y) = \sum_{i=0}^6\sum_{j=0}^6 c_{i,j} x^i y^j\]
for which $c(\ga^i, \ga^j)=0$ for all pairs $(i,j)$ in the above list,
where $\ga$ is primitive in $\bE=\bF_8$.
Possible nonzeros of codewords are $(1,3), (1,6), (1,5)$, their conjugates, and their
symmetric counterparts $(3,1), (6,1), (5,1)$ and their conjugates.

This code has length $n=49$, dimension $k=18$, and minimum distance $d=12$.
Note that in this case a BCH bound can be at most seven since each 
non-identity element of $\bE^*$ has order seven. 
To prove that the minimum distance $d$ of the code satisfies $d\geq 12$ with shifting, 
first we assume that $(1,3)$, and hence also 
$(2,6)$ and $(4,5)$, are nonzeros of a codeword $c$. Then shift as follows:
\beqa &&\emptyset  \mapsto \emptyset\cup\{(1,3)\}  
	 \mapsto \{(0,2)\} \cup \{(1,3)\}  
	\mapsto \{(0,0), (1,1)\} \cup \{(1,3)\}  \mapsto\\
	&& \{(6,5), (0,6), (0,1)\} \cup \{(2,6)\} \mapsto
	\{(6,0), (0,1), (0,3), (2,1)\} \cup \{(4,5)\}  \mapsto \\
	&&\{(6,3), (0,4), (0,6), (2,4), (4,1)\} \cup \{(2,6)\} \mapsto\\ 
	&& \{(4,4), (5,5), (5,0), (0,5), (2,2), (0,0)\} \cup \{(2,6)\} \mapsto \\
	& & \{(6,6), (0,0), (0,2), (2,0), (4,4), (2,2), (4,1)\} \cup \{(4,5)\} \mapsto
 \\
	& & \{(4,2), (5,3), (5,5), (0,3), (2,0), (0,5), (2,4), (2,1)\} \cup \{(2,6)\} \mapsto  \\
	& & \{(2,4), (3,5), (3,0), (5,5), (0,2), (5,0), (0,6), (0,3), (0,1)\} \cup \{(1,3)\}  \mapsto \\
	& & \{(1,1), (2,2), (2,4), (4,2), (6,6), (4,4), (6,3), (6,0), (6,5), (0,0)\} \cup \{(1,3)\}  \mapsto \\
	& & \{(2,2), (3,3), (3,5), (5,3), (0,0), (5,5), (0,4), (0,1), (0,6), (1,1),
(2,4)\} \cup \{(2,6)\}, 
\eeqa
thus proving that $w(c)\geq 12$ in this case.

%Now use the fact that the original zero set is symmetric under $(x,y)\mapsto (y,x)$
%to see that if $(3,1)$ is a nonzero, then also $d\geq 12$.
%Hence we may assume that $(1,3)$, $(2,6)$, $(4,5)$, $(3,1)$, $(6,2)$ and $5,4)$ are 
%also zeros.
So we may assume that $(1,3)$, $(2,6)$, $(4,5)$ are also zeros of~$c$.
Now assume that $(1,5)$, and hence also $(2,3)$ and $(4,6)$, are nonzeros of~$c$. Then 
%shift as follows:
%\beqa \emptyset & \mapsto& \emptyset\cup\{(1,5)\}  \\
%	& \mapsto& \{(1,4)\} \cup \{(1,5)\}  \\
%	& \mapsto& \{(1,3),(1,4)\} \cup \{(1,5)\}  \\
%	& \ldots& \\
%	& \mapsto& \{(1,0) (1,1) (1,2) (1,3) (1,4)\} \cup \{(1,5)\}  \\
%	& \mapsto& \{(0,0) (0,1) (0,2) (0,3) (0,4) (0,5)\} \cup \{(1,5)\}  \\
%	& \mapsto& \{(0,6) (0,0) (0,1) (0,2) (0,3) (0,4) (1,4)\} \cup \{(1,5)\}  \\
%	& \ldots& \\
%	& \mapsto& \{(0,2) (0,3) (0,4) (0,5) (0,6) (0,0) (1,0)  (1,1) (1,2) (1,3) (1,4)\} \cup \{(1,5)\},
%\eeqa
shifting proves that first the sets
$$\emptyset, \{(1,5)\},
\{(1,4),(1,5)\}, \ldots, \{(1,0), (1,1), \ldots,  (1,5)\},$$ 
and then the sets
$$\{(0,0), (0,1), \ldots, (0,5), (1,5)\}, \ldots, \{(0,2), (0,3), \ldots,(0,6), (0,0),
(1,0), \ldots, (1,5)\}$$ 
are all independent, thus again proving that $w(c)\geq 12$ in this case.

So we may assume that $(1,5)$, $(2,3)$ and $(4,6)$ are also zeros of~$c$. Now assume that 
$(1,6)$, and hence also $(2,5)$ and $(4,3)$, are nonzeros of~$c$. Then
%shift as follows:
%\beqa \emptyset & \mapsto& \emptyset\cup\{(1,6)\}  \\
%	& \mapsto& \{(1,5)\} \cup \{(1,6)\}  \\
%	& \mapsto& \{(1,4),(1,5)\} \cup \{(1,6)\}  \\
%	& \ldots& \\
%	& \mapsto& \{(1,0) (1,1) (1,2) (1,3) (1,4) (1,5)\} \cup \{(1,6)\}  \\
%	& \mapsto& \{(0,0) (0,1) (0,2) (0,3) (0,4) (0,5) (0,6)\} \cup \{(1,6)\}  \\
%	& \mapsto& \{(0,6) (0,0) (0,1) (0,2) (0,3) (0,4) (0,5) (1,5)\} \cup \{(1,6)\}  \\
%	& \ldots& \\
%	& \mapsto& \{(0,1) (0,2) (0,3) (0,4) (0,5) (0,6) (0,0) (1,0)  (1,1) (1,2) (1,3) (1,4)(1,5)\} \cup \{(1,6)\},
%\eeqa
a similar shifting procedure proves first that the sets 
$$\emptyset, \{(1,6)\},
\{(1,5),(1,6)\}, \ldots, \{(1,0), (1,1), \ldots,  (1,6)\},$$ 
and then the sets
$$\{(0,0), (0,1), \ldots, (0,6), (1,6)\}, \ldots, \{(0,1), (0,2), \ldots,(0,6), (0,0),
(1,0), \ldots, (1,6)\}$$ 
are all independent,  thus proving that now $w(c)\geq 14$.
Now use the fact that the original zero set is symmetric under $(x,y)\mapsto (y,x)$ as
follows: if one of $(1,3)$ or $(3,1)$ is a nonzero, then $w(c)\geq12$; otherwise both 
$(1,3)$ and $(3,1)$ (and all their conjugates) are zeros. In that case, if one of
$(1,5)$ or $(5,1)$ is a nonzero, then again $w(c)\geq12$; otherwise also both $(1,5)$ and
$(5,1)$ (and all their conjugates) are zeros. Finally, in that case, if one of
$(1,6)$ or $(6,1)$ is a nonzero, then $w(c)\geq 16$; otherwise also both $(1,6)$ and
$(6,1)$ (and all their conjugates) are zeros. But then all elements of $\Ghat$ are
%HH
zeros, and the codeword is the all-zero word.

This code was investigated in \cite{cam}, where it was shown that the distance is 12 by other means.
\end{exa}

\section{\label{Salt}An alternative derivation of the shift bound}
In this section we relate the shift bound to a method from \cite{mes-pq}. We begin by recalling some notions from that paper.
% present an alternative criterium for $\cZ$-independence in $\Ghat$ connected to some notions from \cite{mes-pq}.
%To this end, we first make a definition.
Let $G$ be a finite abelian group, $f: G\rightarrow \bF$ be a nonzero $\bF$-valued function, and let $S=\supp(f)$ be the support of $f$. As we did before, we identify $f$ with the element $f=\sum_{x\in G} f(x)x$ in the group algebra $\bF[G]$. Define a linear map $T_f: \bF[G]\rightarrow \bF[G]$ by $T_f(u)=f* u$ for all $u\in \bF[G]$. Since $T_f(\chi)=\hf(\chi) \chi$ for every character $\chi\in \Ghat$, we see that the rank of $T_f$ equals $|\supp(\hf)|$.
%A simple lower bound for $\rnk(T_f)$ in terms of~$S$ can be obtained as follows.
Now suppose that $x_1, \ldots, x_t\in G$ have the property that $x_i+S\not\subseteq (x_1+S)\cup (x_2+S)\cup \cdots \cup (x_{i-1}+S)$ for $i=2, 3 \ldots, t$, where $x+S=\{x+s\mid s\in S\}$. Then from this property of the support $S$ and the fact that $T_f(x_i)=\sum_{z\in x_i+S}f(z-x_i)z$,  it follows that no linear combinations $\gl_1 T_f(x_1)+\cdots+\gl_tT_f(x_t)$, with $\gl_i\in \bF$ for all $i$, can be 0 unless $\gl_t=\cdots=\gl_1=0$; that is,  $T_f(x_1), \ldots,T_f(x_t)$ are independent in $\bF[G]$, and hence $\rnk(T_f)=|\supp(\hf)|\geq t$.
%A similar lower bound for $|\supp(f)|$ in terms of $\cN=\supp(\hf)$ can be obtained by dualizing. We now make this precise.
We can formalize the above as follows.
%, using $\hat{\Ghat}=G$ and $\hat{\hat{f}}=f$.
%\bde{LDmes} Let $(G,+)$ be an abelian group, let $\cZ\subseteq \Ghat$, and write $\cN=\Ghat\setminus \cZ$. We say that the sequence $(\psi_1, \psi_2, \ldots, \psi_t)$ has {\em $\cN$-rank\/} $t$  in~$\Ghat$  if
%\beql{LEmes}(\psi_j+\cN) \not\subseteq (\psi_1+\cN) \cup (\psi_2+\cN)\cup \cdots \cup (\psi_{j-1}+\cN)\eeql
%for $j=2, \ldots, t$.
%\ede
\bde{LDmes} Let $(G,+)$ be an abelian group, and let $S\subseteq G$ be a nonempty subset of~$G$. We say that the 
%set $X\subseteq G$  
sequence $x_1, \ldots, x_t$ in $G$ has {\em $S$-rank\/} $t$  in~$G$  if 
%there is an  ordering $X=\{x_1, \ldots, x_t\}$ of the elements of~$X$ such that  
%sequence $x_1, x_2, \ldots, x_t$ of elements of $G$ has {\em $S$-rank\/} $t$  in~$G$  if 
$x_i+S\not\subseteq (x_1+S)\cup (x_2+S)\cup \cdots \cup (x_{i-1}+S)$ for $i=2, 3, \ldots, t$.
\ede
Then the discussion preceding the above definition  can be stated as follows.
\bpn{LPrb}\rm
Let $f: G\rightarrow \bF$ be nonzero and let $S=\supp(f)$. If there exists a 
%subset $X\subseteq G$ 
sequence in~$G$ 
with $S$-rank~$t$, then $|\supp(\hf)|\geq t$.
\epn
By dualizing, we obtain the following.
\bco{LCrb}\rm
Let $G$ be an abelian group, $\bF$ be a field of characteristic 0 or with ${\rm char}(\bF)$ not dividing $|G|$, $f: G\rightarrow \bF$ be a nonzero function, and let $\cN=\supp(\hf)$. If  there exists a sequence in $\Ghat$ with $\cN$-rank $t$, then $|\supp(f)|\geq t$.
\eco
\bpf
Immediate consequence of the fact that $\hat{\Ghat}=G$ and $(\hat{f})^*=f$.
\epf
%
%\beql{LEmes}(\psi_j+\cN) \not\subseteq (\psi_1+\cN) \cup (\psi_2+\cN)\cup \cdots \cup (\psi_{j-1}+\cN)\eeql
%for $j=2, \ldots, t$.
%\ede
%
We now show that the lower bound on the minimum distance of an abelian code afforded by \Co{LCrb} is equivalent to the shift bound, an observation that seems to be new. We need some preparation. 
\bde{LDsbs}\rm 
Let $(G,+)$ be an abelian group and let $\cZ\subsetneq \Ghat$.
%, and let $\cN=\Ghat\setminus \cZ$. 
We say that the sequence $\ga_1, \ldots, \ga_t$  in $\Ghat$ is {\em $\cZ$-independent\/} in~$\Ghat$ if 
there are $\psi_1, \psi_2, \ldots, \psi_t$ in $\Ghat$ such that $\psi_1+\ga_1\notin\cZ$ and for $2\leq i\leq t$, we have $\psi_i+\ga_i\notin \cZ$ and $\psi_i+\ga_1, \psi_i+\ga_2, \ldots, \psi_i+\ga_{i-1} \in \cZ$.
\ede
%
%For later use, we explicitly state the following.
\bpn{LPAind}\rm Let $\cZ\subsetneq\Ghat$ be a proper subset of $\Ghat$.
%, and let $\cN=\Ghat\setminus \cZ$. 
A set $\cA\subseteq \Ghat$ is $\cZ$-independent if and only if,  for some ordering $\cA=\{\ga_1, \ldots, \ga_t\}$ of the elements of $\cA$, the sequence $\ga_1, \ldots, \ga_t$ is $\cZ$-independent in $\Ghat$.
\epn
\bpf
Since $\cN\neq \emptyset$, the statement holds for $t=1$. Since the only way to enlarge independent sets is through rule 2, we see that a set $\cA$ of size $t\geq2$ is $\cZ$-independent if and only if it can be written as $\cA=\cA'\cup\{\ga_t\}$ with $\cA'$ being $\cZ$-independent and with $\psi_t+\cA'\subseteq \cZ$ and $\psi_t+\ga_t\in \cN$ for some $\psi_t\in \Ghat$. Now the statement follows by induction on $t$.
\epf

\bpn{LLmes}\rm Let $\cZ\subsetneq \Ghat$, and let $\cN=\Ghat\setminus \cZ$. The sequence $ \ga_1, \ldots, \ga_t$  has $\cN$-rank $t$ in $\Ghat$ if and only if the sequence $-\ga_1, \ldots, -\ga_t$ is $\cZ$-independent.
%the sequence $\ga_1, \ldots, \ga_t$ has $\cN$-rank $t$  in~$\Ghat$ if and only if
%$\eta_1, \ldots \eta_t\in \cN$ such that the elements
%$\phi_1, \phi_2, \ldots, \phi_t$  satisfying $\phi_j\in \psi_j+\cN$ and
%$\phi_j \in \psi_i+\cZ$ for $i=1, \ldots, j-1$.
\epn
\bpf
Write $\cZ=\Ghat\setminus \cN$.
By definition, a sequence $\ga_1, \ldots, \ga_t$ has $\cN$-rank $t$ in $\Ghat$ precisely when $\ga_i+\cN\not\subseteq \ga_1+\cN\cup \cdots \cup \ga_{i-1}+\cN$ for $i=2, \ldots, \cN$, or, equivalently, if there exist $\psi_2, \ldots, \psi_t$ such that $\psi_i\in \ga_i+\cN$ and $\psi_i\in (\ga_1+\cN\cup \cdots \cup \ga_{i-1}+\cN)^c=\ga_1+\cZ\cap \cdots \cap \ga_{i-1}+\cZ$ for $i=2, \ldots, t$. Since $\cN\neq \emptyset$ by assumption, the claim now follows from \Pn{LPAind}.
%$(\psi_1, \psi_2, \ldots, \psi_s)$ has {\em Meshulam-rank\/} $t$  in~$\Ghat$ with respect to $\cZ$.
%Let $\psi_1=\psi_1+\eta_1\in \psi_j+\cN$, and for $j=2, \ldots, t$, let  $\phi_j=\psi_j+\eta_j\in \psi_j+\cN$ the element outside $(\psi_1+\cN) \cup (\psi_2+\cN)\cup \cdots \cup (\psi_{j-1}+\cN)$ that exists according to \De{LDmes}. Then $\phi_j\in (\psi_1+\cZ) \cap \cdots \cap (\psi_{j-1}+\cZ)$. Conversely, it is easily seen that if there exist $\phi_1, \ldots, \phi_t$ as in \Le{LLmes}, then (\ref{LEmes}) holds for $j=2, \ldots, t$.
\epf
As a consequence of \Pn{LPAind} and~\ref{LLmes}, we have the following alternative description of the shift bound.
\btm{LTmes}\rm
%The sequence $(\psi_1,  \psi_2, \ldots, \psi_t)$ has Meshulam-rank $t$ in $\Ghat$ with respect to $\cZ$ if and only if every associated set $\{\phi_1, \ldots, \phi_t\}$ is $\cZ$-independent in $\Ghat$.
Let $(G,+)$ be abelian and let $\cZ\subset \Ghat$ be a proper subset of $\Ghat$. Put $\cN=\Ghat\setminus \cZ$. Then $\gd(\Ghat,\cZ)$ is equal to the largest integer $t$ for which there exists a sequence of $\cN$-rank $t$ in $\Ghat$.
\etm
In view of this theorem, \Co{LCrb} provides an alternative derivation of the shift bound for abelian codes in \Tm{code-shift}.
\section{\label{gen}A generalization of the Donoho-Stark uncertainty principle}
The Donoho-Stark uncertainty principle for finite abelian groups states: If $f:
G\rightarrow \bC$ is a nonzero complex function on a finite abelian group $(G,+)$ with $\hf$ its
complex Fourier transform, then
$$|\supp(f)| |\supp(\hf)| \geq |G|;$$
equality holds
%only if $\supp(f)$
%ane $\supp(\hf)$ are cosets of (dual)
%subgroups of $G$ and $\Ghat$, respectively.
if and only if $f$ takes the form
\[ f = c \chi I_{a+H} \]
for some $c\in\bC^*$ and some complex character $\chi$, where
$I_{a+H}$ denotes the indicator function of some coset $a+H$ of a subgroup $H$ of $G$.
This uncertainty principle has an interesting history. The principle, in a more
general form for locally compact abelian (LCA) groups, seems to have
been discovered first by Matolcsi and Sz{\" u}cs \cite{mat} in 1973; the case of equality
was handled by K.T.~Smith \cite{smi} in 1990.
In the case where the group is cyclic an elementary proof was given in 1989 by Donoho
and Stark \cite{ds}, essentially using the BCH bound. They also treat the more general
case where $f$ is {\em highly
concentrated\/} on a subset of the group. Similar investigations can already be found
in the work of Slepian in \cite{sle}.
%HH
For further work on uncertainty relations, see for example \cite{wolf1},
\cite{wolf2}, \cite{fol}.

A still somewhat complicated elementary induction proof for finite abelian groups was
given in \cite{pre2}, see also \cite{pre-book}, \cite{pre1}. Their proof uses the
Donoho-Stark principle for cyclic groups that is proved in~\cite{ds}, essentially by
using the BCH bound.
More recently, the principle has been recovered in \cite{qpv}, where a simpler
elementary proof was given. Basically the same proof has been given by Tao
%HH 10,-6
in~\cite{tao}. In that paper, an interesting sharpening of this inequality has been
obtained:
%in the case of cyclic groups of prime order. His result, which
if $G$ is cyclic of prime order $p$,  and $f: G\rightarrow \bC$ is a nonzero function, then
\[|\supp(f)|+|\supp(\hf)| \geq p+1.\]
The proof depends on an old result of Chebotar{\"e}v (see, e.g. \cite{stel}) that is not always valid for finite
fields, see \cite{kra-rep, kra-2008}.
This work has been generalized to all abelian groups by Meshulam in \cite{mes}. Tao later observed that the generalization signifies that for an abelian group $(G,+)$, the points $(\supp(f), \supp(\hf))$ are in the convex hull of the points $(|H|, |G/H|)$ for subgroups $H$ of $G$, see for example \cite{kra-rep, kra-2008}.

Some different type of generalizations are discussed in the next section.

Here it is our aim to present several generalizations of the Donoho-Stark uncertainty principle for nonzero
functions $f: G\rightarrow \bF$, for {\em any\/} field $\bF$ of characteristic zero or
characteristic $p$ not dividing $|G|$. As an aid, we show
that for $\cZ\subseteq \Ghat$
%denotes the zeros of $f$ then
the shift bound $\gd(\Ghat, \cZ)$ satisfies
\[ \gd(\Ghat, \cZ) \geq |\Ghat|/(|\Ghat|-|\cZ|), \]
and we determine when equality holds.
%For completeness sake, we remark that it is not difficult to see that Tao's result
%is not always true for transforms over finite fields.
We start with the following.
\begin{teor}\label{shift-lb}
%\label{tzindep}
Let $G$ be a finite abelian group with $\Ghat$ its group of $\bE$-characters,
% of order $|\Ghat|=n$,
let $\Hhat$ be a subgroup of $\Ghat$, %of size $n=|\Hhat|$,
and let
%$\cZ\subsetneq \Hhat$
$\cZ$ be a proper subset of $\Hhat$.
Write $\cN=\Hhat\setminus\cZ$. % and write $k=|\cN|$.
Then the following hold.\\
(i) There exists a $\cZ$-independent subset of $\Hhat$ of size
%HH
at least~$|\Hhat|/|\cN|$, that is, $\gd(\Hhat, \cZ) \geq
|\Hhat|/(|\Hhat|-|\cZ|)$. \\
(ii) If $|\cN|$ divides $|\Hhat |$ and the maximum size of a $\cZ$-independent subset
of $\Hhat$ is $|\Hhat|/|\cN|$, then $\cN$ is a coset of a subgroup of $\Hhat$.
\end{teor}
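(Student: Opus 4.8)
The plan is to prove both parts through the combinatorial reformulation of the shift bound from Section~\ref{Salt}, applied to the finite abelian group $\Hhat$ in place of $\Ghat$ (every notion there — $\cN$-rank, $\gd$, \Co{LCrb}, \Tm{LTmes} — makes sense for an arbitrary finite abelian group). Thus I take as my working tool the identity, from \Tm{LTmes} applied to $\Hhat$, that $\gd(\Hhat,\cZ)$ equals the largest $t$ for which there is a sequence $\ga_1,\ldots,\ga_t$ in $\Hhat$ of $\cN$-rank $t$, i.e. with $\ga_i+\cN\not\subseteq(\ga_1+\cN)\cup\cdots\cup(\ga_{i-1}+\cN)$ for each $i\ge2$.

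For part (i) I would run a greedy covering argument. Choosing $\ga_1$ arbitrarily, I keep selecting $\ga_i$ so that $\ga_i+\cN$ is not covered by the union $U_{i-1}=\bigcup_{j<i}(\ga_j+\cN)$ of the previous translates. Such a choice is always available while $U_{i-1}\neq\Hhat$: if every translate $\ga+\cN$ were contained in $U_{i-1}$, then since $\cN\neq\emptyset$ the translates $\{\ga+\cN\mid \ga\in\Hhat\}$ cover $\Hhat$, forcing $U_{i-1}=\Hhat$. Each step strictly enlarges the union, so the process terminates with $U_t=\Hhat$ after producing a sequence of $\cN$-rank $t$; since $|\Hhat|=|U_t|\le t|\cN|$, we get $t\ge|\Hhat|/|\cN|$, which yields (i).

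For part (ii), assume $|\cN|$ divides $|\Hhat|$, write $m=|\Hhat|/|\cN|$, and suppose the maximum $\cN$-rank is exactly $m$. First I extract a tiling: any maximal greedy run from (i) produces a $\cN$-rank sequence of some length $t$; the covering bound gives $t\ge m$ while maximality gives $t\le m$, so $t=m$ and $|\Hhat|=|U_m|\le m|\cN|=|\Hhat|$ is tight. Hence the translates $\ga_1+\cN,\ldots,\ga_m+\cN$ are pairwise disjoint and partition $\Hhat$. Next I reduce to the case of trivial stabilizer: let $H(\cN)=\{h\in\Hhat\mid h+\cN=\cN\}$, so $\cN$ is a union of cosets of $H(\cN)$, and pass to $\bar\Hhat=\Hhat/H(\cN)$ with image $\bar\cN$. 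One checks that the $\cN$-rank of a sequence in $\Hhat$ equals the $\bar\cN$-rank of its image, since containments among these $H(\cN)$-saturated translates are decided modulo $H(\cN)$, and two members of a rank sequence cannot have equal images (equal images would make the later translate a repeat of an earlier one). Therefore the maximum $\bar\cN$-rank equals $m=|\bar\Hhat|/|\bar\cN|$, and $\bar\cN$ has trivial stabilizer; the goal reduces to showing $|\bar\cN|=1$, which is precisely the statement that $\cN$ is a single coset of the subgroup $H(\cN)$.

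The crux, and the step I expect to be the main obstacle, is to show that a subset $\cM$ of an abelian group $K$ with trivial stabilizer and $|\cM|\ge2$ must admit a sequence of $\cM$-rank $m+1$, contradicting maximality. Here I would use the tiling $K=(\ga_1+\cM)\cup\cdots\cup(\ga_m+\cM)$ obtained as above, after translating so that $0\in\cM$ and $\ga_1+\cM=\cM$. Picking $a\neq b$ in $\cM$ and setting $\beta=b-a\neq0$, the translate $\beta+\cM$ meets $\cM$ (it contains $b$) yet differs from $\cM$ because the stabilizer is trivial; in particular it is none of the tiles $\ga_2+\cM,\ldots,\ga_m+\cM$, which are disjoint from $\cM$. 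I would then verify that the chain $\beta+\cM,\ \cM,\ \ga_2+\cM,\ldots,\ga_m+\cM$ is strictly increasing: the step $\cM$ adds a new point since $\cM\neq\beta+\cM$, and each tile $\ga_i+\cM$ with $i\ge2$ is disjoint from all earlier tiles and is not equal to $\beta+\cM$, so its only possible overlap with the running union is inside $\beta+\cM$, which cannot contain it. This produces a sequence of $\cM$-rank $m+1$, the desired contradiction, forcing $|\cM|=1$ and hence the theorem. The delicate points to get right are the invariance of $\cN$-rank under the quotient map and the bookkeeping showing that every translate in the prepended chain genuinely contributes an uncovered element.
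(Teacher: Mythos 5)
Your proof is correct. Part (i) is the paper's own counting argument in dual form: you bound $|\bigcup_{j<i}(\ga_j+\cN)|\le (i-1)|\cN|$ where the paper bounds $|\bigcup_{\eta\in\cN}(\cA-\eta)|\le|\cA||\cN|$, the two being the same estimate read through \Pn{LLmes}. Part (ii), however, takes a genuinely different route. The paper works directly with a maximal independent set $\cA=(\cB\cup\{\eta_0\})-\psi$: since no set $\cB\cup\{\eta\}$ with $\eta\in\cN$ can be shifted into $\cZ$, the covering criterion from part (i) gives $\Hhat=\bigcup_{\eta'\in\cN}(\cB-\eta')\cup(\eta-\cN)$ for every $\eta\in\cN$, and a size count forces the complement $\Khat$ of $\bigcup_{\eta'}(\cB-\eta')$ to equal $\eta-\cN$ for all $\eta$ simultaneously; hence $\Khat=\cN-\cN$ is closed under subtraction and $\cN$ is one of its cosets. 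You instead extract from the equality case of the greedy bound a tiling of $\Hhat$ by $m=|\Hhat|/|\cN|$ disjoint translates of $\cN$, pass to the quotient by the stabilizer $H(\cN)$ (correctly noting that $\cN$-rank is preserved because all relevant translates are $H(\cN)$-saturated, so containments are decided modulo $H(\cN)$), and then, if $|\bar{\cN}|\ge 2$, prepend the translate $(b-a)+\bar{\cN}$ to the tiling to manufacture a rank-$(m+1)$ sequence; the verification goes through exactly as you say, since each tile is disjoint from the earlier tiles and, having the same cardinality as $(b-a)+\bar{\cN}$ but not being equal to it, cannot be absorbed by it. Your argument is longer and needs the quotient and tiling bookkeeping, but it offers a transparent geometric reason for the equality case (a tile with nontrivial structure modulo its stabilizer can always be ``overpacked'' by one extra translate), whereas the paper's argument is shorter and purely algebraic. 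Both are valid proofs of the theorem.
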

\begin{proof}
(i) We claim that if $\cA\subseteq \Hhat$ and $|\cA|<|\Hhat|/|\cN|$, then there exists some $\psi\in \Hhat$ such that 
$\cA-\psi=\{\alpha-\psi\mid \alpha\in\cA\}\subseteq \cZ$. Indeed, for all $\eta\in\cN$ we have 
$\eta\in\cA - \psi$ if and only if $\psi\in \cA -\eta;$ hence
$$\cA-\psi\subseteq\cZ\; {\rm if\;
and\; only\; if}\; \psi\notin\cup_{\eta\in\cN} (\cA-\eta).$$
Now if $|\cA|<|\Hhat|/|\cN|$, then
$$|\cup_{\eta\in\cN} (\cA-\eta)|\leq \sum_{\eta\in\cN} |\cA-\eta|\leq
|\cN||\cA|<|\Hhat|;$$
hence there exists a $\psi\in \Hhat$ such that $\psi\not\in\cup_{\eta\in\cN} (\cA-\eta)$, and the claim follows. Since $\cN$ is nonempty by assumption,
part~(i) of the lemma follows by induction from parts 2~and~3 of Lemma~\ref{Zindep}.

(ii) Now suppose that $|\cN|$ divides $|\Hhat |$, that $\cA\subseteq \Hhat$ is
$\cZ$-independent with $|\cA|=|\Hhat|/|\cN|$,
and that no $\cZ$-independent set in $\Hhat$ is larger than $|\Hhat|/|\cN|$.
Since $\cA$ is
$\cZ$-independent, there exists a $\cZ$-independent set $\cB \subseteq \cZ$ and some
$\eta_0\in\cN$ such that $\cA=(\cB\cup\{\eta_0\})-\psi$ for some $\psi\in\Hhat$.
Consider the $\cZ$-independent sets in $\Hhat$ of the form
$\cA_{\eta}=\cB\cup\{\eta\}$ for $\eta\in\cN$.
Each of these sets has the maximum size $|\Hhat|/|\cN|$,
so by our assumptions none of these sets can be shifted inside $\cZ$.
%HH
Hence from the analysis in part~(i), we see that for each $\eta\in\cN$ we have that
%HH dpls
\[ \Hhat=\displaystyle{\cup_{\eta'\in\cN}} (\cB\cup\{\eta\})-\eta'. \]
Hence if $\Khat=\Hhat\setminus (\cup_{\eta'\in\cN} \cB -\eta')$, then $|\Khat|=|\cN|$
and $\eta-\cN=\Khat$ for all $\eta\in\cN$.
%, that is, $\cN-\cN=\Khat$.
In particular, $\cN-\cN=\Khat$.
%So we immediately have that $\cN=\eta+\Khat$ for some $\eta\in \cN$ and
%$\Khat-\Khat=\Khat$, that is, $\Khat$ is a subgroup of $\Hhat$.
%HH-ref
So we immediately have that
$\Khat-\Khat=( \eta-\cN)-( \eta-\cN) = \cN-\cN=\Khat$, that is,
$\Khat$ is a subgroup of $\Hhat$, and $\cN=\eta-\Khat$ is a coset of $\Khat$.
\end{proof}

%\begin{rem}
%\rm
%In the case
%If $\cN=\eta+\Khat$ for some subgroup $\Khat$ of $\Hhat$ and if $\cZ=\Hhat
%\setminus\cN$, then by using indction it is easily seen that a subset $\cA$ of
%$\Khat$ is $\cZ$-independent if and only if $\cA$ contains at most one element from
%each coset of $\Khat$ in $\Hhat$.
%\end{rem}

%Now Theorem~\ref{tmain} is an immediate consequence of Theorem~\ref{tzindep} and
%Theorem~\ref{tsup} (for part (ii)).
As an immediate consequence we obtain the following generalization of the Donoho-Stark
uncertainty principle.
\begin{teor}[Generalized Donoho-Stark]\label{tmain}
Let $(G,+)$ be a finite abelian group and let $\bF$ be a field of characteristic zero
or characteristic $p$ not dividing $|G|$. Let $\bE$ denote the extension of
$\bF$ containing a primitive $N$-th root of unity, where $N=\expo(G)$,
% denotes the
%smallest positive integer for which $g^N=1$, the identity of $G$, for all $g\in G$,
and let $\Ghat$ denote the group of $\bE$-valued characters of $G$.
%Let $f: G\rightarrow \bF$ be a nonzero function from $G$ to $\bF$ and let
%$\hf: \Ghat\rightarrow \bE$ be the Fourier transform, where
%\[ \hf(\chi) = \sum_{g\in G} f(g)\chi(-g) \]
%for all $\chi\in \Ghat$.
Then for any subgroup $\Hhat$ of $\Ghat$ and for
any function $f: G\rightarrow \bF$ %for which its i
with Fourier Transform $\hf$ that is nonzero
on~$\Hhat$ we have that
\[ |\supp(f)| | \supp(\hf)\cap \Hhat| \geq |\Hhat|. \]
Equality holds only if $\supp(\hf)\cap \Hhat$ is a coset of a subgroup of $\Hhat$.
In particular, if $f: G\rightarrow \bF$ is nonzero, then
\[ |\supp(f)| | \supp(\hf)| \geq |G|, \]
with equality if and only if $f$ is of the form
\[ f = \gl \chi I_{a+H},\]
%, \qquad \hf = \mu \Phi^{(-a)}_{\chi H^\perp}, \]
for some $\gl \in \bE\setminus\{0\}$, some $\chi\in\Ghat$, some $a\in G$,
and some subgroup~$H$ of~$G$;
here $I_{a+H}$ denotes the indicator function of the coset $a+H$.
% of a subgroup~$H$ of~ $G$.
%where $\gl = f(a) \chi(a)^{-1} \neq 0$, $\mu = |H| f(a)$,
%\[ \Phi^{(-a)} (\phi) = \phi(-a)\]
%is a $\bE$-valued character on $\hat{\Ghat}\cong G$,
%and
%\[ H^\perp = \{ \phi\in\Ghat \mid \mbox{$\phi(h) =1$ for all $h\in H$} \}. \]
\end{teor}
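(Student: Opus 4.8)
The plan is to derive the inequality directly from the lower bound on the shift bound established in \Tm{shift-lb}, and then to read off the equality statements from part~(ii) of that theorem together with \Tm{tsup}.

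First I would fix a subgroup $\Hhat$ of $\Ghat$ and set $\cZ=\{\chi\in\Hhat\mid \hf(\chi)=0\}$, so that $\cN:=\Hhat\setminus\cZ=\supp(\hf)\cap\Hhat$. The hypothesis that $\hf$ is not identically zero on $\Hhat$ says $\cN\neq\emptyset$, i.e. $\cZ$ is a proper subset of $\Hhat$, so \Tm{shift-lb}(i) applies and produces a $\cZ$-independent subset $\cA\subseteq\Hhat$ with $|\cA|\geq |\Hhat|/|\cN|$. The one observation that must be made is that such an $\cA$ is also independent with respect to $\cZ(f)=\{\chi\in\Ghat\mid \hf(\chi)=0\}$ in the sense of \Le{Zindep}: the construction of $\cA$ uses only shifts by elements of $\Hhat$ and extensions by elements of $\cN$, and since $\cZ\subseteq\cZ(f)$ while every extension element lies in $\cN\subseteq\Ghat\setminus\cZ(f)$, the identical sequence of rules~1--3 of \Le{Zindep} is a valid $\cZ(f)$-independent construction in $\Ghat$. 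Hence by \Tm{shift}, $|\supp(f)|=\weight(f)\geq |\cA|\geq |\Hhat|/|\cN|$, which rearranges to $|\supp(f)|\,|\supp(\hf)\cap\Hhat|\geq|\Hhat|$.

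For the equality statement in the general case, suppose $|\supp(f)|\,|\cN|=|\Hhat|$. Then $\weight(f)=|\Hhat|/|\cN|$ is an integer, so $|\cN|$ divides $|\Hhat|$. By the argument above every $\cZ$-independent subset of $\Hhat$ is also $\cZ(f)$-independent in $\Ghat$, hence has size at most $\weight(f)=|\Hhat|/|\cN|$ by \Tm{shift}; together with part~(i) this forces the maximum size of a $\cZ$-independent subset of $\Hhat$ to equal $|\Hhat|/|\cN|$. Now the hypotheses of \Tm{shift-lb}(ii) are met, and we conclude that $\cN=\supp(\hf)\cap\Hhat$ is a coset of a subgroup of $\Hhat$, as required.

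Finally, taking $\Hhat=\Ghat$ gives $|\supp(f)|\,|\supp(\hf)|\geq|G|$ for every nonzero $f$. For the characterization of equality, the forward direction combines the two previous steps with \Tm{tsup}: equality forces $\supp(\hf)$ to be a coset $\phi+\Hhat'$ of a subgroup $\Hhat'$ of $\Ghat$, and then \Tm{tsup} (whose hypothesis is now satisfied) identifies $f$ as $\lambda\phi I_{a+H}$ with $H=(\Hhat')^\perp$, which is exactly the asserted form $f=\gl\chi I_{a+H}$. The reverse direction is a short direct computation: for $f=\gl\chi I_{a+H}$ one evaluates $\hf(\psi)=\gl\chi(a)\psi(-a)\sum_{h\in H}(\chi-\psi)(h)$ and finds $\supp(\hf)=\chi+H^\perp$, so that $|\supp(\hf)|=|H^\perp|=|G|/|H|=|G|/|\supp(f)|$ and equality holds (equivalently, this is the ``if'' part of \Tm{tsup}). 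I expect the only genuinely delicate point to be the squeeze in the equality argument, where one must confirm that the maximal $\cZ$-independent set size coincides with $\weight(f)$ so that the divisibility and maximality hypotheses of \Tm{shift-lb}(ii) are in force; the inequality itself and the passage to the explicit form are then routine bookkeeping on top of the earlier results.
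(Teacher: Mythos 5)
Your proposal is correct and follows essentially the same route as the paper's proof: Theorem~\ref{shift-lb}(i) combined with Theorem~\ref{shift} for the inequality, Theorem~\ref{shift-lb}(ii) for the coset structure at equality, and Theorem~\ref{tsup} for the explicit form of $f$. You in fact supply two details the paper leaves implicit --- that a $\cZ$-independent set in $\Hhat$ (with $\cZ=\cZ(f)\cap\Hhat$) is also $\cZ(f)$-independent in $\Ghat$, and the verification of the divisibility and maximality hypotheses of part~(ii) --- both of which are handled correctly.
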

\begin{proof}
If $\cN=\supp(\hf)\cap \Hhat$ is nonempty and if $\cZ=\cZ(f)\cap \Hhat$ is the
collection of zeros of $f$ on $\Hhat$, then by part (i) of Theorem~\ref{shift-lb}
there is a $\cZ$-independent set in $\Hhat$ of size at least $|\Hhat|/|\cN|$;
%Indeed, with the notation of Theorem~\ref{shift-lb}, if
%we apply this theorem with $\cZ=\cZ(f)$ for some $f\in\bF[G]$ with $f\neq 0$,
%then $k=|\cN|=\weight(\hf)$ and there is a $\cZ$-independent set of size at
%least $|\Ghat|/\weight(\hf)$;
hence from Theorem~\ref{shift} we conclude that
%$|\supp(f)\geq |\Hhat|/|\supp(\hf)\cap \Hhat|$.
%HH 12,13
$|\supp(f)|\geq |\Hhat|/|\cN|$.

%, which is part (i) of Theorem~\ref{tmain}.
Furthermore, part (ii) of Theorem~\ref{shift-lb} shows that if this bound cannot be
improved, then the support $\cN$ of $\fh$ on $\Hhat$ is a coset of a subgroup of
$\Hhat$. Now the last part of Theorem~\ref{tmain} follows by
applying Theorem~\ref{tsup} with $\Ghat=\Hhat$.
\end{proof}

\section{\label{fext}A sharpening of the  Donoho-Stark uncertainty principle}
Our aim in this section is to obtain a sharpening of the Donoho-Stark uncertainty principle by the shifting technique. We need some preparations. Let $G$ be a finite abelian group, and let $S$ be a subset of $G$. We define the 
{\em stabilizer\/} $H(S)$ of $S$ by $H(S)=\{g\in G\mid g+S=S\}$. A few simple properties of the stabilizers are given below.
\ble{LLstab} With the above notation, we have the following:\\
(i) $S$ is a union of cosets of $H(S)$. So $|H(S)|$ divides $|S|$, and in particular, $|H(S)|\leq |S|$.\\
%$|H(S)|$ divides $|S|$, so in particular, $|H(S)|\leq |S|$.\\
(ii) If $S$ is a coset of a subgroup $K$ of $G$, then $H(S)=K$.\\
(iii) For $S\subseteq G$, we have $H(S)=H(G\setminus S)$. \\
(iv) For $S\subseteq G$, we have $H(S)=\cap_{s\in S} (s-S)$.\\
(v) Let $A$ and $S$ be subsets of $G$ such that $A\cap S=\emptyset$.  Then $(A-S)\cap H(S)=\emptyset$.
\ele
\bpf
(i)  Write $H$ for $H(S)$. Since $s+H\subseteq S$ for every $s\in S$, the set $S$ is a union of cosets of $H$. So $|H(S)|$ divides $|S|$; in particular, $|H(S)|\leq |S|$,  equality holds if and only if $S$ is a coset of $H$.

(ii) Let $S=a+K$ for a subgroup $K$ of $G$. Clearly we have $K\subseteq H(S)$. On the other hand, since $H(S)\subseteq S-S$, we see that $H(S)\subseteq K$. Hence $H(S)=K$.
%If $g+S=S$, then $g\in S-S$, hence $H(S)\subseteq S-S$.  So if $S$ is a subgroup, then $H(S)\subseteq S$ and obviously $S\subseteq H(S)$.

(iii) Evident.

(iv) Note that $g\in s-S$ for all $s\in S$ precisely when $s-g\in S$ for all $s$, that is, when $S-g\subseteq S$, i.e., when $g\in H(S)$.

(v) If $a-s\in H(S)$ with $a\in A$ and $s\in S$, then $a\in s+H(S)\subseteq S$.
\epf
We are now ready to prove the following improvement of the Donoho-Stark uncertainty principle. 
\begin{teor}[Sharpened Donoho-Stark]\label{LTmaini}
Let $(G,+)$ be a finite abelian group and let $\bF$ be a field of characteristic zero
or characteristic $p$ not dividing $|G|$. Let $\bE$ denote an extension of
$\bF$ containing a primitive $N$-th root of unity, where $N=\expo(G)$,
and let $\Ghat$ denote the group of $\bE$-valued characters of $G$.
Then for
% any subgroup $\Hhat$ of $\Ghat$ and for
any nonzero function $f: G\rightarrow \bF$
with Fourier transform $\hf$,
%that is nonzero on~$\Hhat$
we have that
\beql{LEtmain11} |\supp(f)| | \supp(\hf)| \geq |G|+|\supp(\hf)|-|H(\supp(\hf)|;\eeql
and dually,
\beql{LEtmain12} |\supp(f)| | \supp(\hf)| \geq |G|+|\supp(f)|-|H(\supp(f)|.\eeql
\end{teor}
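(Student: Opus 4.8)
The plan is to prove the first inequality \Eqn{LEtmain11} and to deduce the second \Eqn{LEtmain12} from it by the same dualization that turns \Pn{LPrb} into \Co{LCrb}. Namely, since $\hat{\Ghat}\cong G$ and $(\hf)^*=f$, applying \Eqn{LEtmain11} over $\bE$ to the function $\hf:\Ghat\rightarrow\bE$ (whose transform is, up to reflection, $f$ itself) with the roles of $G$ and $\Ghat$ interchanged yields $|\supp(\hf)|\,|\supp(f)|\geq|\Ghat|+|\supp(f)|-|H(\supp(f))|$; as $|\Ghat|=|G|$ and reflection preserves both cardinalities and stabilizers, this is exactly \Eqn{LEtmain12}. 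So I would concentrate on \Eqn{LEtmain11}.

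For \Eqn{LEtmain11}, write $\cN=\supp(\hf)$, $\cZ=\Ghat\setminus\cN$, and $H=H(\cN)\leq\Ghat$. By the shift bound (\Tm{shift}), $|\supp(f)|\geq\gd(\Ghat,\cZ)$, and by \Tm{LTmes} the number $\gd(\Ghat,\cZ)$ equals the largest $t$ for which a sequence of $\cN$-rank $t$ exists in $\Ghat$. Hence it suffices to prove the purely combinatorial estimate $\gd(\Ghat,\cZ)\cdot|\cN|\geq|G|+|\cN|-|H|$. First I would fix a \emph{maximum}-length $\cN$-rank sequence $\beta_1,\ldots,\beta_t$ and set $U_i=\bigcup_{j\leq i}(\beta_j+\cN)$. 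Maximality forces $U_t=\Ghat$, since any point outside $U_t$ lies in a translate that would extend the sequence. Because $\cN$ is a union of cosets of $H$ by \Le{LLstab}(i), so is each translate $\beta_j+\cN$ and each partial union $U_i$; consequently every increment $|U_i|-|U_{i-1}|$ is a positive multiple of $|H|$, i.e. at least one whole $H$-coset is added at each step.

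The heart of the matter is the claim that the \emph{final} increment is exactly $|H|$, that is, the last translate covers precisely one new $H$-coset. Indeed, as $U_t=\Ghat$, the uncovered set before the last step, $\Ghat\setminus U_{t-1}$, is contained in the single translate $\beta_t+\cN$ and equals the union of its new cosets $D_1,\ldots,D_s$. If $s\geq2$, I would exhibit a translate $\beta'+\cN$ meeting $\{D_1,\ldots,D_s\}$ in at least one but not all of these cosets; inserting $\beta'$ immediately before $\beta_t$ then produces a valid $\cN$-rank sequence of length $t+1$, contradicting maximality. The existence of such $\beta'$ is exactly where the stabilizer is used: in the quotient $\Ghat/H$ the image $\bar\cN$ has \emph{trivial} stabilizer, so for distinct $\bar D_1,\bar D_j$ not every translate of $\bar\cN$ through $\bar D_1$ can also contain $\bar D_j$, for otherwise $(\bar D_j-\bar D_1)+\bar\cN=\bar\cN$ would force $\bar D_j-\bar D_1$ to stabilize $\bar\cN$; a translate through $\bar D_1$ missing $\bar D_j$ is the desired splitter. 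This covering/splitting lemma in the quotient is the main obstacle, and it is the natural sharpening of the equality analysis already carried out in \Tm{shift-lb}(ii).

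Granting the claim, the conclusion is a one-line count. Writing the total overlap as $\Omega=\sum_{i=2}^{t}\bigl(|\cN|-(|U_i|-|U_{i-1}|)\bigr)=t|\cN|-|U_t|=t|\cN|-|G|$, every summand is nonnegative and the last one alone equals $|\cN|-|H|$, so $\Omega\geq|\cN|-|H|$ and therefore $\gd(\Ghat,\cZ)\cdot|\cN|=t|\cN|=|G|+\Omega\geq|G|+|\cN|-|H|$, which with $|\supp(f)|\geq\gd(\Ghat,\cZ)$ gives \Eqn{LEtmain11}. Finally I would check that the degenerate cases are consistent with the formula: if $\cN=\Ghat$ then $t=1$ and $H=\Ghat$, while if $|\cN|/|H|=1$ then $\cN$ is a single coset, the extra term $|\cN|-|H|$ vanishes, and the estimate collapses to the plain bound $|\supp(f)|\,|\supp(\hf)|\geq|G|$.
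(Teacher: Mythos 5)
Your proof is correct, but the key combinatorial step is carried out quite differently from the paper's. The paper works directly with a maximum $\cZ$-independent set $\cA=\cB\cup\{\eta_0\}-\chi_0$: maximality gives $(\cB\cup\{\eta\})-\cN=\Ghat$ for every $\eta\in\cN$, so any $\chi\notin\cB-\cN$ lies in $\bigcap_{\eta\in\cN}(\eta-\cN)=H(\cN)$ by \Le{LLstab}(iv), and \Le{LLstab}(v) gives the reverse inclusion, yielding the identity $\cB-\cN=\Ghat\setminus H(\cN)$ and hence $|\cB|\,|\cN|\geq|G|-|H(\cN)|$ in two lines. Your covering formulation via $\cN$-rank sequences (legitimate by \Tm{LTmes} and \Co{LCrb}) reaches the equivalent statement --- the last translate $\beta_t+\cN$ contributes exactly one new $H$-coset --- by an extremal exchange argument: if $s\geq 2$ cosets remained uncovered, a ``splitter'' translate through $D_1$ missing $D_2$ could be inserted before $\beta_t$ to lengthen the sequence. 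Your justification of the splitter is sound (in $\Ghat/H(\cN)$ the image of $\cN$ has trivial stabilizer, so $\bar D_1-\bar\cN\subseteq\bar D_2-\bar\cN$ would force $\bar D_2-\bar D_1=\bar 0$ by equal cardinalities), and the insertion does preserve the rank conditions at positions $t$ and $t+1$; the final overlap count $t|\cN|=|G|+\Omega\geq|G|+|\cN|-|H|$ and the dualization for \Eqn{LEtmain12} match the paper. What your route buys is a geometric covering picture that makes visible \emph{where} the saving of $|\cN|-|H(\cN)|$ over the plain Donoho--Stark count occurs (entirely in the last step), and it naturally extends the equality analysis of \Tm{shift-lb}(ii); what it costs is the extra machinery of Section~\ref{Salt} plus the quotient-group splitting lemma, which the paper's direct difference-set computation with \Le{LLstab}(iv)--(v) avoids entirely.
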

\bpf
%Let $\cN=\supp(\hf)$ and write $\cZ=\Ghat\setminus \cN$. By assumption, $\cN\neq \emptyset$.
%As before, l
Let $\cZ\subseteq \Ghat$ and write $\cN=\Ghat\setminus \cZ$.  Assume that $\cN\neq \emptyset$. Suppose that $\cA$ is a $\cZ$-independent set of maximum size. Then $\cA$ is of the form $\cA=\cB\cup\{\eta_0\}-\chi_0$ with $\cB\subseteq \cZ$ being $\cZ$-independent and $\eta_0\in \cN$, and by the assumption that $\cA$ has maximum size, no $\cZ$-independent set $\cB\cup\{\eta\}$ (with $\eta\in \cN$) can be extended any further; that is, for each $\chi\in\Ghat$ and each $\eta\in \cN$, we have $\cB\cup\{\eta\} -\chi\not\subseteq \cZ$. So for every $\eta\in \cN$ and $\chi\in \Ghat$, there exists $\eta'\in \cN$ such that $\eta'\in\cB\cup\{\eta\}-\chi$; that is, $\chi\in \cB\cup\{\eta\} -\cN$. Hence if $\chi\notin \cB-\cN$, then $\chi\in \eta-\cN$; since this holds for every $\eta\in \cN$, we have $\chi\in \cap_{\eta\in \cN}(\eta-\cN)=H(\cN)$, where the equality follows from \Le{LLstab}, part (iv). Since $\cB\subseteq \cZ=\Ghat\setminus \cN$ by assumption, we have $\cB\cap \cN=\emptyset$; hence according to  \Le{LLstab}, part (v), we have $(\cB-\cN)\cap H(\cN)=\emptyset$, and it now follows that
\[\cB-\cN=\Ghat\setminus H(\cN).\]
Using $|\cB-\cN|\leq |\cB||\cN|$ and $|G|=|\Ghat|$, we  conclude that
\beql{LEAN1} |G|-|H(\cN)| =|\cB-\cN|\leq |\cB||\cN|.\eeql
Now suppose that $f: G\rightarrow \bF$ is nonzero, and let $\cN=\supp(\hf)$ and $\cZ=\cZ(f)$. Suppose that $\cA$ is a $\cZ$-independent set of maximum size. By the shifting bound, we have $|\supp(f)|\geq |\cA|$. As we have shown above, we can write $\cA=\cB\cup\{\eta_0\}-\chi_0$ with $\cB\subseteq \cZ$ being $\cZ$-independent and $\eta_0\in \cN$, and $\cB-\cN=\Ghat\setminus H(\cN)$.
%|H(\cN)|\geq |\Ghat|-|\cA||\cN|,\eeql
It follows that
%, using $|G|=|\Ghat|$ again,
\[|\supp(f)||\supp(\hf)|\geq (|\cB|+1)|\cN| \geq |G|-|H(\cN)|+|\cN|=|G|-|H(\supp(\hf))|+|\supp(\hf)|.\]

Since $(\hf)^*=f$ and $|G|=|\Ghat|$, the second inequality in the theorem follows by dualizing (i.e., replacing $G$ by $\Ghat$ and interchanging $f$ and $\hf$).
\epf

\bre{LRtmain1} \rm Note that by \Le{LLstab}, part (i), we have $|H(\supp(\hf))|\leq |\supp(\hf)|$. It follows that the right hand side of (\ref{LEtmain11}) is greater than or equal to $|G|$. So (\ref{LEtmain11}) is a sharpening of the Donoho-Stark uncertainty principle. When $\cN=\supp(\hf)$ is a coset of $H(\cN)$, then $H(\cN)=\cN$ and the inequality (\ref{LEtmain11}) reduces to the Donoho-Stark inequality. We also remark that the lower bound on $|{\rm supp}(f)|$ arising from (\ref{LEtmain11}) improves the bound obtained from the Donoho-Stark inequality provided that $|G|\pmod {|{\rm supp}(\hf)|}$, the least non-negative remainder, is greater than $|H({\rm supp}(\hf)|$. For an example of this situation, see Example~\ref{LEham}.
\ere
\bex{LEham}\rm
Let $n=2^d-1$, $G=\bZ_n$, and $\Ghat =\{1,\ga,\ldots ,\ga^{n-1}\}$, where $\ga$ is a primitive element of $\bF_{2^d}$. Let $\cZ=\{\ga,\ga^2,\ga^{2^2}, \ldots, \ga^{2^{d-1}}\}\subseteq \Ghat$ and $\cN=\Ghat\setminus \cZ$. Then $H(\cN)=H(\cZ)=\{1\}$. Let $f: \bZ_n \rightarrow \bF_2$ be a nonzero function with $f(\ga)=0$. Note that the condition on $f$ implies that the associated codeword $(f_0, f_1, \ldots, f_{n-1})$ is contained in the $[n=2^d-1, k=2^d-1-d, 3]$ binary Hamming code. Now if $f\neq 0$ and $f$ has no additional zeros, then for $d\geq 3$  the Donoho-Stark bound for $f$ gives
\[w(f)\geq \left\lceil \frac{2^d-1}{2^d-1-d}\right\rceil =1+ \left\lceil \frac{d}{2^d-1-d}\right\rceil=2,\]
while our improved bound gives
\[w(f)\geq   \left\lceil \frac{2^d-1+2^d-1-d-1}{2^d-1-d}\right\rceil =2+ \left\lceil \frac{d-1}{2^d-1-d}\right\rceil=3,\]
showing that our new bound can improve the weight estimate afforded by the Donoho-Stark bound.
\eex
In the remainder of this section, we investigate the case of equality in (\ref{LEtmain11}). Note that if we have equality in the Donoho-Stark inequality, then $\supp(f)$ is a coset of a subgroup $K$ of $G$, hence $|H(\supp(f)|=|K|=|\supp(f)|$ and we also have equality in the sharpened versions (\ref{LEtmain11}) and (\ref{LEtmain12}) of that inequality. We will call this the {\em classical\/} case of equality.  Note that when one of $|H(\supp(f))|=|\supp(f)|$ or $|H(\supp(\hf)|=|\supp(\hf)|$ holds, they both hold and then (and only then)  we are in the classical case. We next describe a simple non-classical example.
\bex{LEs2}\rm
Let $(G,+)$ be a finite abelian group with identity 0, and let $\bF$ be a field of characteristic zero
or characteristic $p$ not dividing $|G|$. Let $a\in G\setminus \{0\}$, and define $f: G\rightarrow \bF$ by letting $f(0)=1$, $f(a)=-1$, and $f(g)=0$ for $g\neq 0,a$. We claim that for this $f$ equality in~(\ref{LEtmain11}) holds.
%To see this, we need some preparation. \\
%Claim 1: For $S\subseteq G$, we have $H(S)=H(G\setminus S)$. \\
%Proof: We have $g\in H(S)$ if and only if $g+S=S$, which holds if and only $g+G\setminus S=G\setminus S$, thast is, if $g\in H(G\setminus S)$.\\
%Cliam 2: For $f$ as defines above, we have $\supp(f)=\{0,a\}$, $\supp(\hf)=\Ghat\setminus \langle a\rangle^\perp$, and $H(\supp(\hf))=\langle a\rangle^\perp$.\\
Obviously, $\supp(f)=\{0,a\}$. Furthermore, for $\chi\in \Ghat$, we have that $\hf(\chi)=\sum_{x\in G} f(x)\chi(-x)=1-\chi(a)$; hence $\chi\in \supp(\hf)$ iff $\chi(a)\neq 1$, i.e., iff $\chi\notin \langle a\rangle^\perp$; so $\supp(\hf)=\Ghat\setminus \langle a\rangle^\perp$. %Finally, using claim 1, we have that
Finally, $H(\supp(\hf))=H(\Ghat\setminus \langle a\rangle^\perp)=H( \langle a\rangle^\perp)= \langle a\rangle^\perp$ by \Le{LLstab}, parts (iii) and (ii). %Now $\phi\in H( \langle a\rangle^\perp)$ iff $\phi+\chi\in  \langle a\rangle^\perp$ if $\chi\in  \langle a\rangle^\perp$, that is, iff $\phi(a)(\chi(a)=1$ whenever $\chi(a)=1$; obviously this holds iff $\phi(a)=1$, i.e., iff $\phi\in  \langle a\rangle^\perp$.
Using the above, we find that $|\supp(f)||\supp(\hf)|=|G|-|H(\supp(\hf))|+|\supp(\hf)|=2(|G|-|\langle a\rangle^\perp|)$, so we indeed have equality in (\ref{LEtmain11}). This example is a classical one if and only if $|\Ghat|=2| \langle a\rangle^\perp|$.  Since $a\neq 0$, we have  $\langle a\rangle^\perp\neq \Ghat$; so using  (\ref{LEorth}), we see that this example is a classical one if and only if $|\langle a\rangle|=2$, that is, if and only if $\langle a\rangle=\{0,a\}$ and $2a=0$.

Let us now investigate when we also have equality in (\ref{LEtmain12}).
%We have that $|G|+|\supp(f)|-|H(\supp(f)|=|G|+2-|H\{0,a\})|$, and
Using (\ref{LEorth}), we have that $|\supp(f)||\supp(\hf)|=2(|G|-|G|/|\langle a\rangle|)$, and $|G|+|\supp(f)|-|H(\supp(f)|=|G|+2-|H\{0,a\})|$.
We now distinguish two cases. First, if $H(\{0,a\})=\{0,a\}$, that is, if  $2a=0$, then $\{0,a\}=\langle a\rangle$ and we always have equality in (\ref{LEtmain12}); this is a classical example. Second, if $H(\{0,a\})=\{0\}$, that is, if $2a\neq 0$, then $|\langle a\rangle|>2$ and we have equality precisely when $G=\langle a\rangle$ with $|G|=3$; we can take $G=\bZ_3$ and $a=1$. This is again a non-classical example.
\eex

In order to have equality in  (\ref{LEtmain12}), we need sets $B, N\subseteq G$ with $B\cap N=\emptyset$ satisfying
\beql{LEnf} G-H(N)=B-N, \qquad |B-N|=|B||N|,\eeql
where $N$ is a union of cosets of $H(N)$.
(Note that here, for convenience, we have dualized.) We ask whether from such sets, we can construct a function $f$ for which $\supp(f)=N$ and $|supp(\hf)|=|B|+1$? Such sets give rise to what is called a {\em near-factorization\/} \cite[Section 9.3]{ss}, \cite{cghk}, \cite{pech}, \cite{ss-pap}, \cite{bhs}. Indeed, put $G_0=G/H(N)$; since $N$ is a union of cosets of $H(N)$, we have that $N=H(N)-D$ for some set $D$ of size $|N|/H(N)|$. Then $G_0\setminus\{0\}=B_0+D$ (direct sum), where $B_0=\{b+H(N)\mid b\in B\}$. It is conjectured that near-factorizations of abelian groups exist only for cyclic groups \cite{ss}.
% equals $A\cup \{\eta}$ for some $\eta\notin A$, such that
\bex{LEnf1}\rm
Let $G=\bZ_n$ with $n-1=uv$. Take $N=\{0, 1, \ldots, u-1\}$ and $A=\{u,2u, \ldots, (v-1)u\}$. Then $A-N=G\setminus\{0\}$, so $(N,-A)$ is a near-factorization of~$G$.
\eex
For other examples of near-factorizations and further discussions, we refer to the references given above.

\section{\label{con} Conclusion}
In this paper, we present the shift bound for abelian codes. We give two proofs, one by using the approach developed in \cite{lw}, and the other by using the method from \cite{mes-pq}. We use the shift bound for abelian codes to prove a generalization (Theorem~\ref{tmain}) of the Donoho-Stark uncertainty principle. Furthermore, a pair of inequalities stronger than the Donoho-Stark uncertainty principle is proved by using the shifting technique. While the equality case in the Donoho-Stark uncertainty principle can be characterized completely, it seems not easy to characterize the equality case in these new inequalities. We leave this as a problem for further research.

\vspace{0.1in}
\noindent{\bf Acknowledgements.} The work of Tao Feng is supported by Natural Science Foundation of China (Grant No. 11771392). The work of Qing Xiang is partially supported by an NSF grant DMS-1600850. The third author, Qing Xiang, would like to thank Zhejiang University and the Three-Gorge Math Research Center (TGMRC), where this work was partially carried out.


\begin{thebibliography}{99}
%
\bibitem{bhs}G.~Bacs\'o, L.~H\'ethelyi, and P.~Sziklai, {\em New near-factorizations of finite groups\/},  Studia Scientiarum Mathematicarum Hungarica 45.4 (2008): 493-510.
%
\bibitem{ber1} S.D.~Berman, {\em On the theory of group codes\/}, Kibernetika, 3:
31--39, 1967.
%
\bibitem{ber2} S.D.~Berman, {\em Semisimple cyclic and abelian codes II\/},
Kibernetika, 3: 21--30, 1967.
%
\bibitem{bla} R.E.~Blahut, Theory and practice of error control codes, Reading, MA:
Addison-Wesley 1983.
%
\bibitem{cghk}D.~DeCaen, D., D.A.~Gregory, I.G.~Hughes and D.L.~Kreher, {\em Near-factors of finite groups\/}, Ars Combinatoria, 29, 1990, pp.\ 53--63.
%
\bibitem{cam} P.~Camion, Abelian codes, Technical report 1059, Mathematical Research
Center, University of Wisconsin, 1971.
%
\bibitem{de} D.L.~Donoho and M.~Elad, {\em Optimally sparse representation in
general (nonorthogonal) dictionaries via $\ell^1$ minimization\/}, Proc.\ Nat.\
Acad.\ Sci., vol.\ 100, no.\ 5, Mar.\ 2003, pp.\ 2197--2202.
%
\bibitem{dh}
D.L.~Donoho and X.~Huo, {\em Uncertainty principles and ideal atomic decomposition\/},
IEEE Trans.\ on Inform.\ Theory, vol.\ 47, no.\ 7, Nov.\ 2001, pp.\ 2845--2862.
%
\bibitem{ds}
D.L.~Donoho and P.B.~Stark, {\em Uncertainty principles and signal recovery\/},
SIAM J.\ Applied Math.\ {\bf 49} (1989) 906--931.
%
\bibitem{dgv}
V.~Diaz, C.~Guevara, M.~Vath, {\em Codes from $n$-Dimensional Polyhedra and
$n$-Dimensional Cyclic Codes\/}, Proceedings of SIMU summer institute, 2001.
%(arXiv:cs.IT/0506102 v1 29 Jun 2005)
%
\bibitem{el-book}M.~Elad, Sparse and Redundant Representations: From Theory to Applications in Signal and Image Processing, Springer Science \& Business Media, 2010.
%
\bibitem{eb} M.~Elad and A.M.~Bruckstein, {\em A generalized uncertainty principle and
sparse representation in pairs of bases\/}, IEEE Trans.\ on Inform.\ Theory, vol.\ 48,
no.\ 9, Sept.\ 2002, pp.\ 2558--2567.
%
\bibitem{fol}
%MR1448337 (98f:42006)
%Folland, Gerald B.(1-WA); Sitaram, Alladi(6-ISIBG-SM)
G.B.~Folland, A.~Sitaram,
{\em The uncertainty principle: a mathematical survey\/},
%(English. English summary)
J.\ Fourier Anal.\ Appl.\ 3 (1997), no.\ 3, 207--238.
%42B10 (22E30 43A30 81S05 81S30)
%
\bibitem{fre}P.~E.~Frenkel, {\em Simple proof of Chebotar\"ev's theorem on roots of unity}, arXiv preprint math/0312398 (2003).
%
\bibitem{for} G.\ David Forney, Jr., {\em Transforms and Groups\/}, in Codes, Curves
and Signals: Common Threads in Communications, A.~Vardy (Ed.), Norwood, MA: Kluwer,
1998, Chapter 7.
%
\bibitem{ga}W.~Gharibi, {\em An improved lower bound of the spark with applications\/}, Computer Science 2012.
%
\bibitem{gn} R.~Gribonval and M.~Nielsen, {\em Sparse representations in unions
of bases\/}, IEEE Trans.\ on Inform.\ Theory, vol.\ 49, no.\ 12, Dec.\ 2003,
pp.\ 3320--3325.
%
\bibitem{hew} E.~Hewitt and K.A.~Ross, Abstract harmonic analysis I, New York:
Springer 1979.
%Isbn: 3540941908, \$150.-, 1994 (reissue, paperback).
%
\bibitem{kra-rep}
F.~Krahmer, G.E.~Pfander and P.~Rashkov, {\em Support size conditions for time-frequency representations on finite abelian groups}, Tech.\ Rep.\ 13, School of Engineering and Science, Jacobs University, 2007.
%
\bibitem{kra-2008} F.~Krahmer, G.E.~Pfander and P.~Rashkov,
{\em Uncertainty in time-frequency representations on finite abelian groups and applications\/},
Appl.\ Comput.\ Harmon.\ Anal.\ 25 (2008) 209–225.
%
\bibitem{kupp}P.~Kuppinger, G.~Durisi and H.~Bolcskei, {\em Uncertainty Relations and Sparse Signal Recovery for Pairs of General Signal Sets\/}, IEEE Trans.\ on Inform.\ Theory, vol.\ 58, no.\ 1, pp.\ 263--277, Jan.\ 2012.
%
\bibitem{lw} J.~H.\ van Lint, R.~M.\ Wilson, {\em On the minimum distance of
cyclic codes\/}, IEEE Trans.\ Inform.\ Theory, {\bf 32} (1986), 23--40.
%
\bibitem{mcws} F.J.~MacWilliams and N.J.A.~Sloane, The theory of error-correcting
codes, Amsterdam: North-Holland 1977.
%
\bibitem{man} H.B.~Mann, Addition theorems, Wiley\&Sons, 1965.
%
\bibitem{mat} T.~Matolcsi and J.~Sz\"ucs, {\em Intersection des mesures spectrales
conjug\'ees\/}, C.\ R.\ Acad.\ Sci\ Paris {\bf 277} (1973), 841--843.
%
\bibitem{pre2}
E.\ Matusiak, M.~\"Ozaydin, T.~Przebinda, {\em The Donoho-Stark uncertainty principle
for a finite abelian group\/},
Acta Math.\ Univ.\ Comenianae, vol.\ LXXIII, 2 (2004), pp.\ 155--160.
%
\bibitem{mes} Roy Meshulam, {\em An uncertainty inequality for finite abelian
groups\/}, Europ.\ J.\ of Combinatorics 27 (2006) 63–67.
%
\bibitem{mes-pq}Roy Meshulam, {\em An uncertainty inequality for groups of order $pq$\/}, Europ.\ J.\ Combinatorics (1992) 13, 401-407.
%
\bibitem{pech}A.~P\^echer, {\em Cayley partitionable graphs and near-factorizations of finite groups\/},  Discrete Math.\ vol.\ 276, no.\ 1 (2004): 295-311.
%
\bibitem{pre1}
M.~\"Ozaydin and T.~Przebinda, {\em An entropy-based uncertainty principle for a
locally compact abelian group\/}, J.\ of Functional Analysis 215 (2004) 241--252.
%
\bibitem{pre-book}
T.~Przebinda, {\em Three uncertainty principles for a locally compact abelian
group\/}, in: Representations of real and $p$-adic groups, Eng-Chye Tan and Chen-Bo
Zhu (Ed.), Lecture Notes Series, Institute for Mathematical Sciences, National
University of Singapore, Vol.\ 2., April 2004.
%
\bibitem{qpv} M.~Quisquater, B.~Preneel, J.~Vandewalle, {\em A new inequality
in Discrete Fourier theory}, IEEE Trans.\ on Inform.\ Theory, vol.\ 49, no.\ 8,
August 2003, pp.\ 2038--2040.
%
\bibitem{ri14}B.~Ricaud, B.~Torr\'esani, {\em A survey of uncertainty principles and some signal processing applications\/},
Adv.\ in Comp.\ Math., Vol.\ 40, nr.\ 3, 2014, pp.\ 629-650.
%
\bibitem{ri13}B.~Ricaud and B.~Torr\'esani, {\em Refined Support and Entropic Uncertainty Inequalities\/}, IEEE Trans.\ on Inform.\ Theory, vol.\ 59, no.\ 7, pp.\ 4272--4279, July 2013.
%
\bibitem{rud} W.~Rudin, Fourier analysis on groups, New York: Wiley 1990.
%Isbn: 047152364K, \$115.-, paperback.
%
\bibitem{ss-pap}T.~Sakuma and H.~Shinohara. {\em Krasner near-factorizations and 1-overlapped factorizations\/}.The Seventh European Conference on Combinatorics, Graph Theory and Applications. Edizioni della Normale, Pisa, 2013.
%
\bibitem{sle} D.~Slepian, {\em Prolate spheroidal wave functions, Fourier analysis,
and uncertainty - V: The discrete case\/}, Bell Syst.\ Tech.\ J., vol.\
57, no.\ 5, May-June 1978, pp.\ 1371--1430.
%
\bibitem{smi} K.T.~Smith, {\em The uncertainty principle on groups\/}, SIAM J.\ Appl.\
Math.\ {\bf 50} (1990), 876--882.
%
\bibitem{stel}
P.~Stevenhagen, H.W.~Lenstra Jr., {\em Chebotar\"ev and his density theorem\/},
Math.\ Intelligencer {\bf 18} (1996), no.\ 2, 26--37.
%
\bibitem{ss}S.~Szab\'o and A.D.~Sands. Factoring groups into subsets. CRC Press, 2009.
%
\bibitem{tao} T.~Tao, {\em An uncertainty principle for cyclic groups of prime
order\/}, Mathematical Research Letters {\bf 12} (2005) 121--127.
%
\bibitem{ter}
A.~Terras, Fourier Analysis on Finite Groups and Applications, Cambridge University
Press, Cambridge 1999.
%
%
%\bibitem{welch} D.J.A.~Welch, Matroid Theory, London: Academic Press, 1976.
%
\bibitem{wolf1}
%MR1218362 (94k:43006)
J.A.~Wolf, %Joseph A.(1-CA)
{\em The uncertainty principle for Gel\cprime fand pairs\/},
%. (English. English summary)
Nova J.\ Algebra Geom.\ 1 (1992), no.\ 4, 383--396.
%43A30 (22D10)
%
\bibitem{wolf2}
%MR1313941 (96c:43008)
%Wolf, Joseph A.(1-CA)
J.A.~Wolf,
{\em Uncertainty principles for Gel\cprime fand pairs and Cayley complexes\/},
%(English. English summary)
75 years of Radon transform (Vienna, 1992), 271--292,
Conf.\ Proc.\ Lecture Notes Math.\ Phys., IV,
Internat.\ Press, Cambridge, MA, 1994.
%
\end{thebibliography}
\end{document}